\ifdefined\ReviewVersion
  \documentclass[review,12pt]{elsarticle}
\else
  \documentclass[preprint,12pt]{elsarticle}
\fi
\usepackage[T1]{fontenc}
\usepackage{lmodern}
\usepackage{amsmath,amssymb,amsthm,mathtools,bm,booktabs,graphicx,microtype}
\usepackage{tabularx}
\usepackage{lineno}[2023/01/19]
\usepackage[hidelinks]{hyperref}
\journal{Communications in Nonlinear Science and Numerical Simulation}
\newtheorem{theorem}{Theorem}[section]
\newtheorem{proposition}[theorem]{Proposition}
\newtheorem{lemma}[theorem]{Lemma}

\theoremstyle{definition}
\newtheorem{definition}[theorem]{Definition}
\theoremstyle{remark}

\newcommand{\R}{\mathbb R}
\newcommand{\Sset}{\mathcal S}
\newcommand{\tr}{\operatorname{tr}}
\newcommand{\diag}{\operatorname{diag}}
\newcommand{\spec}{\operatorname{spec}}
\newcommand{\dd}{\mathrm d}
\newcommand{\norm}[1]{\left\lVert#1\right\rVert}
\newcommand{\bK}{\overline K}
\newcommand{\Bfun}{\mathcal B}
\newcommand{\Efun}{\mathcal E}
\newcommand{\transpose}{\mathsf T}
\graphicspath{{figures/}}
\allowdisplaybreaks[1]
\hypersetup{pdftitle={Finite-time boundary collision in planar linear quadratic regulator gradient flows},pdfauthor={Kang Liu}}

\begin{document}
\begin{frontmatter}
\title{Finite-time boundary collision in planar linear quadratic regulator gradient flows}
\author{Kang Liu\corref{cor1}}
\ead{kanyo@foxmail.com}
\cortext[cor1]{Corresponding author.}
\address{School of Future Technology, Xi'an Jiaotong University, China}
\begin{abstract}
\ifdefined\ReviewVersion\linespread{1}\selectfont\fi
Policy gradient methods for the linear quadratic regulator optimize feedback gains using a cost over an infinite horizon. When this cost is evaluated at one fixed initial state, it need not diverge near every part of the stability boundary. We study whether the Euclidean gradient flow can reach this boundary in finite optimization time. For controllable planar systems with one input and positive definite quadratic weights, an exact representation of the cost yields a necessary and sufficient condition for the existence of such a collision. The condition reduces to a scalar root calculation and includes the critical case, where a double direction root attracts an open set of stabilizing gains. For a triangular family, the criterion gives an explicit algebraic parameter region, and every trajectory either converges to the Riccati gain or collides with the stability boundary. A sharp threshold for compact cost sublevels provides a convergence certificate. Along every colliding trajectory, the stability margin and the smallest eigenvalue of the accumulated state Gramian vanish linearly, although the Gramian remains positive definite before collision. Numerical experiments examine the dependence on initialization, slow passage near a critical direction, and the effect of adding excitation in a second state direction.
\end{abstract}
\begin{keyword}
Linear quadratic regulator \sep Gradient flow \sep Stability boundary \sep Finite-time collision \sep Global dynamics
\end{keyword}
\end{frontmatter}
\ifdefined\ReviewVersion\linenumbers\fi

\section{Introduction}
\label{sec:intro}

Policy gradient methods for the linear quadratic regulator (LQR)
optimize a feedback gain over the open set of stabilizing
controllers. Their convergence requires the optimization trajectory
to remain in this set. When the objective averages performance over
initial states with a positive definite covariance, the cost diverges
as the gain approaches the stability boundary, so cost descent
prevents a trajectory from reaching it. This property can fail when
performance is evaluated from one fixed initial state. A mode
approaching instability may receive progressively less excitation,
allowing the cost to decrease as the stability margin vanishes.
The resulting issue is whether the gradient flow can reach the
stability boundary in finite optimization time.

Classical LQR theory characterizes an optimal stabilizing gain
through an algebraic Riccati equation
\cite{kalman1960,anderson2007,lancaster1995}.
Kleinman \cite{kleinman1968} developed an iterative method for
solving this equation. Direct policy optimization instead updates
the gain using derivatives of the cost. Global convergence analyses
in discrete time \cite{fazel2018,bu2019} and continuous time
\cite{bu2020,fatkhullin2021,mohammadi2022} combine gradient bounds
with control of cost sublevels. This framework also supports
accelerated methods \cite{feng2025} and robustness analyses of
perturbed gradient flows \cite{cui2024}.
Hu et al.\ \cite{hu2023} survey the broader development of policy
optimization for control. The choice of optimization metric also
affects the gain dynamics, as illustrated by the Riemannian
approach to constrained feedback optimization of Talebi and
Mesbahi \cite{talebi2024}. Here we consider the Euclidean gradient
with respect to the original feedback gain.

The limitations of evaluating a single initial state are already
recognized. Bu et al.\ \cite[Sections~3.2--3.3]{bu2020} show that
the corresponding cost can remain finite at a gain that does not
stabilize the whole system, and recover coercivity by evaluating
a spanning set of initial states. Razin et al.\
\cite{razin2024} examine restricted initial state coverage in
relation to implicit bias and extrapolation in finite horizon,
discrete-time LQR. For the infinite horizon gradient flow studied
here, finiteness of the boundary cost alone does not determine
whether a descent trajectory approaches that boundary or whether
the approach takes finite time. These questions depend on the
direction of the gradient near the boundary.

Accumulated excitation also enters recent results on the geometry
of the LQR objective. Watanabe and Zheng
\cite{watanabe2026sicon} refine conditions for strong duality,
hidden convexity, and gradient dominance. Their unified analysis
\cite{watanabe2026unified} allows a semidefinite initial covariance
under an additional condition on the accumulated state Gramian
and establishes uniform gradient dominance on compact subsets
of the stabilizing domain. Our analysis concerns the behavior
as a trajectory approaches the boundary of that domain.
In particular, the accumulated Gramian can remain positive
definite before collision while its smallest eigenvalue tends
to zero. Positivity along the trajectory therefore does not
by itself prevent a finite-time loss of stability.

We establish a necessary and sufficient condition for the existence
of finite-time boundary collision in every controllable planar
system with one input, positive definite quadratic weights, and
a fixed nonzero initial state. An exact representation of the
cost identifies the only possible collision gain and reduces
the local dynamics to a scalar equation for the direction of
approach. The resulting criterion is computed from the system
data. Whenever it holds, including the critical equality case,
a nonempty open set of stabilizing initial gains leads to
collision. Along every colliding trajectory, the stability margin
and the smallest eigenvalue of the accumulated Gramian vanish
linearly in the remaining optimization time.

For a triangular family, we obtain an explicit polynomial
description of the collision region and classify every
stabilizing initialization. Each trajectory either converges
to the stabilizing Riccati gain or reaches the stability boundary
in finite optimization time. A sharp threshold for compact cost
sublevels provides a convergence certificate for an individual
gain. The collision region is a bounded interval in the squared
coupling strength when the second parameter is fixed.
Numerical experiments illustrate the dependence on initialization,
slow passage near critical parameters, and the effect of adding
excitation in a second initial state direction.

Section~\ref{sec:criterion} states the general collision criterion.
Section~\ref{sec:proof} proves it and derives the collision
asymptotics. Section~\ref{sec:family} presents the explicit
parameter region and global classification for the triangular
family. Section~\ref{sec:numerics} reports the numerical experiments.
The appendices provide algebraic calculations and reproduction
details.

\section{Problem formulation and collision criterion}
\label{sec:criterion}

Consider the continuous-time linear system with static state feedback
\begin{equation}
 \dot x(t)=Ax(t)+Bu(t),~~
 u(t)=-Kx(t),~~ x(0)=x_0\ne0,
 \label{eq:plant}
\end{equation}
where $x(t)\in\R^2$ is the state, $u(t)\in\R$ is the input,
$A\in\R^{2\times2}$, $B\in\R^{2\times1}$, and
$K\in\R^{1\times2}$ is the feedback gain.
Throughout, $(A,B)$ is controllable, the state weight
$Q=Q^\transpose\in\R^{2\times2}$ is positive definite,
and the input weight $R$ is a positive scalar.

Define the closed-loop matrix and the stabilizing domain by
\[
 F_K:=A-BK,~~
 \Sset:=\{K\in\R^{1\times2}:F_K\text{ is Hurwitz}\}.
\]
A matrix is Hurwitz if all its eigenvalues have strictly negative
real parts. For each $K\in\Sset$, the cost from the fixed initial
state $x_0$ is
\[
 J(K):=\int_0^\infty
 \bigl(x(t)^\transpose Qx(t)+Ru(t)^2\bigr)\,\dd t,
\]
where $x$ and $u$ follow \eqref{eq:plant} with this gain held fixed.

We study the gradient flow
\begin{equation}
 \frac{\dd K}{\dd\tau}=-\nabla_KJ(K),~~
 K(0)=K_0\in\Sset.
 \label{eq:flow}
\end{equation}
Here $t$ denotes physical time and $\tau$ denotes optimization
time. The gradient uses the Euclidean inner product
$\langle H_1,H_2\rangle:=\tr(H_1H_2^\transpose)$ on row gains,
where $\tr$ denotes the matrix trace.
We use $\norm{\cdot}$ for the Euclidean norm of vectors and
row gains. Let $[0,T_{\max})$ be the maximal existence interval
of \eqref{eq:flow} in $\Sset$.

\begin{definition}[Boundary collision and forward completeness]
A trajectory undergoes a finite-time boundary collision if
$T_{\max}<\infty$ and
\[
 K(\tau)\longrightarrow\bK\in\partial\Sset
 ~~\text{as }\tau\uparrow T_{\max},
\]
where $\bK$ is a finite gain and $\partial\Sset$ denotes the
boundary of $\Sset$.
The flow is forward complete in $\Sset$ if $T_{\max}=\infty$
for every $K_0\in\Sset$.
\end{definition}

For each $K\in\Sset$, let $P_K$ and $X_K$ be the unique symmetric
solutions of the Lyapunov equations
\begin{align}
 F_K^\transpose P_K+P_KF_K+Q+RK^\transpose K&=0,
 \label{eq:P}\\
 F_KX_K+X_KF_K^\transpose+x_0x_0^\transpose&=0.
 \label{eq:X}
\end{align}
Since $F_K$ is Hurwitz, these solutions satisfy $P_K\succ0$
and $X_K\succeq0$ \cite{anderson2007,lancaster1995}.
The matrix $P_K$ represents the quadratic cost, while
\[
 X_K=\int_0^\infty
 e^{F_Kt}x_0x_0^\transpose e^{F_K^\transpose t}\,\dd t
\]
is the accumulated state Gramian. It records the state directions
excited by the trajectory from $x_0$ and can be singular even
when $K$ is stabilizing. The cost and its gradient satisfy
\begin{equation}
 \begin{aligned}
 J(K)&=x_0^\transpose P_Kx_0
      =\tr\bigl[(Q+RK^\transpose K)X_K\bigr],\\
 \nabla_KJ(K)&=2(RK-B^\transpose P_K)X_K.
 \end{aligned}
 \label{eq:grad}
\end{equation}
The function $J$ is real analytic on $\Sset$.
\ref{app:algebra} derives \eqref{eq:grad}.

\subsection{Quantities computed from the system data}

The collision criterion has two parts. The first identifies the
only possible boundary endpoint compatible with bounded cost.
The second determines whether the gradient flow can approach
that endpoint from within $\Sset$.

Choose an orthogonal matrix $U\in\R^{2\times2}$ whose first
column is $x_0/\norm{x_0}$. Under the change of state coordinates
$\widetilde x=U^\transpose x$, the system data and gain become
\[
 \widetilde A=U^\transpose AU,~~
 \widetilde B=U^\transpose B,~~
 \widetilde Q=U^\transpose QU,~~
 \widetilde K=KU.
\]
This transformation preserves the Euclidean inner product on
gains. We henceforth omit the tildes and use the transformed
coordinates, in which
\[
 x_0=\sqrt{\eta}\,e_1,~~
 \eta:=\norm{x_0}^2,~~ e_1:=(1,0)^\transpose.
\]
Write $B=(d,e)^\transpose$, where $d,e\in\R$.

If $e\ne0$, define
\begin{equation}
 p:=\frac{A_{21}}e,~~
 q:=\frac{A_{22}}e,~~
 a:=-A_{11}+\frac deA_{21},~~
 c:=A_{12}-\frac deA_{22},
 \label{eq:boundarydata}
\end{equation}
where $A_{ij}$ denotes the $(i,j)$ entry of $A$.
The gain $(p,q)$ then gives the closed-loop matrix
\[
 A-B(p,q)=
 \begin{pmatrix}-a&c\\0&0\end{pmatrix}.
\]
When $a>0$, this matrix has a stable mode aligned with $x_0$
and a zero eigenvalue. Lemma~\ref{lem:endpoint} shows that every
finite-time collision must have this form.

For $e\ne0$ and $a>0$, introduce the coefficients
\[
 \gamma:=\frac ca,~~
 \ell:=\frac de,~~
 W_0:=Q+R(p,q)^\transpose(p,q)
     =\begin{pmatrix}w&d_0\\d_0&v_0\end{pmatrix},
\]
so that
\[
 w=Q_{11}+Rp^2,~~
 d_0=Q_{12}+Rpq,~~
 v_0=Q_{22}+Rq^2.
\]
Define
\[
 S:=w\gamma^2+2d_0\gamma+v_0,~~
 A_0:=w(\gamma+\ell)+2d_0-\frac{2aRp}{e}.
\]
These coefficients enter the exact cost representation near the
candidate endpoint in Proposition~\ref{prop:normal}.
In particular,
\[
 S=(\gamma,1)W_0(\gamma,1)^\transpose>0,
\]
because $W_0$ is positive definite.

When $\gamma\ne0$, set
\begin{equation}
 \rho:=\gamma^{-2},~~
 \beta:=\frac{\gamma A_0}{S},~~
 \Bfun_\rho(\theta):=
 2\theta-\theta^2-\rho\frac{\theta^3}{\theta-1},
 ~ \theta>1.
 \label{eq:B}
\end{equation}
The scalar $\theta$ parameterizes candidate approach directions
with motion toward the stability boundary. Its relation to the
gain coordinates is derived in Section~\ref{sec:proof}.
The maximum of $\Bfun_\rho$ determines the collision threshold.

\begin{theorem}[Sharp criterion for finite-time collision]
\label{thm:general}
For the system and gradient flow
\eqref{eq:plant}--\eqref{eq:flow}, the following statements hold.
\begin{enumerate}
\item
The flow is forward complete in each of the following cases:
$e=0$; $e\ne0$ and $a\le0$; or $e\ne0$, $a>0$, and $\gamma=0$.

\item
Suppose $e\ne0$, $a>0$, and $\gamma\ne0$.
The function $\Bfun_\rho$ has a unique maximizer
$\theta_c\in(1,3/2)$, determined by
\begin{equation}
 2(\theta_c-1)^3
 =\rho\theta_c^2(3-2\theta_c).
 \label{eq:thetac}
\end{equation}
Set $\beta_c:=\Bfun_\rho(\theta_c)$. Then
\begin{equation}
 \begin{gathered}
 \text{a finite-time boundary collision exists for some }K_0\in\Sset
 \\
 \Longleftrightarrow~~ \beta\le\beta_c.
 \end{gathered}
 \label{eq:iff}
\end{equation}
If $\beta\le\beta_c$, the set of such initial gains contains
a nonempty open subset of $\Sset$.
Every finite-time collision ends at $\bK=(p,q)$, where
\begin{equation}
 F_{\bK}=
 \begin{pmatrix}-a&c\\0&0\end{pmatrix}.
 \label{eq:Fbar}
\end{equation}
If $\beta>\beta_c$, the flow is forward complete.
\end{enumerate}
\end{theorem}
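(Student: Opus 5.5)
The argument runs in three stages: locate the endpoint, derive a normal form near it, and analyze a one-dimensional direction equation.

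\emph{Stage 1: the endpoint.} Along the flow, $J(K(\tau))$ is nonincreasing and $\int_0^{\tau}\norm{\nabla J}^2\,\dd s\le J(K_0)$. If $T_{\max}<\infty$, Cauchy--Schwarz gives
\[
\int_0^{T_{\max}}\norm{\dot K}\,\dd s\le\sqrt{T_{\max}J(K_0)}<\infty,
\]
so $K(\tau)$ converges to a finite $\bK$. A maximal solution cannot stop at an interior point, hence $\bK\in\partial\Sset$. Since $J$ stays bounded along the trajectory, the limiting closed loop must leave the $x_0$-excited subspace untouched by the marginal mode. Concretely, if $F_{\bK}$ has a marginal eigenvalue whose left eigenvector $z$ satisfies $z^\transpose x_0\ne0$, then $J\to\infty$. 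In the rotated coordinates this forces $e_1$ to be an eigenvector of $F_{\bK}$ with a stable eigenvalue, and forces the other eigenvalue to be $0$; a purely imaginary pair would require $e_1$ to lie in its invariant plane, which is all of $\R^2$, so that case is excluded. Writing $F_{\bK}e_1=-ae_1$ with $a>0$ and second row of $F_{\bK}$ equal to zero determines $\bK=(p,q)$ when $e\ne0$, i.e.\ \eqref{eq:Fbar}, which is Lemma~\ref{lem:endpoint}. If $e=0$, the second row of $F_K$ cannot be changed by $K$. If $a\le0$, no admissible endpoint exists. Either way this proves forward completeness in the first two cases of item~1.

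\emph{Stage 2: normal form near $\bK$.} Here I use Proposition~\ref{prop:normal}. I write $K=\bK+(\xi,\zeta)$ in local coordinates, where one coordinate, roughly $\epsilon$, equals minus the small eigenvalue (the stability margin), and the other measures the coupling that feeds $e_1$ into the slow mode. Solving \eqref{eq:X} explicitly in the $2\times2$ case gives $X_K$ as a rational function. The cost then takes the form $J=J(\bK)+\eta\cdot(\text{ratio of quadratic forms in the small variables divided by }\epsilon)+\text{higher order}$, with coefficients built from $S$, $\gamma$ and $A_0$. When $\gamma=0$, the leading term is a coercive $\zeta^2/\epsilon$ type term: descent increases $\epsilon$ relative to the coupling, so no approach is possible, which finishes item~1. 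When $\gamma\ne0$, I blow up with $\zeta=\theta\epsilon$ (up to a scaling by $\gamma$), where $\theta>1$ corresponds to moving toward the boundary. Substituting into \eqref{eq:flow} gives a leading-order system in which $\dot\epsilon=-\kappa(\theta)+O(\epsilon)$ and $\epsilon\,\dot\theta$ is proportional to $\beta-\Bfun_\rho(\theta)$ after an algebraic rearrangement. Its fixed directions are the roots of $\Bfun_\rho(\theta)=\beta$.

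\emph{Stage 3: the scalar criterion and the local analysis.} The equation $\Bfun_\rho'(\theta)=0$ reduces to $2(\theta-1)^3=\rho\theta^2(3-2\theta)$. The left side increases from $0$ and the right side is positive on $(1,3/2)$ and negative beyond. Together with $\Bfun_\rho\to-\infty$ at $1^+$ and at $\infty$, this gives a unique maximizer $\theta_c\in(1,3/2)$. If $\beta<\beta_c$, there are two simple roots, and one of them is an attracting direction for the desingularized vector field, obtained by rescaling time by $\epsilon$. Along it $\dot\epsilon\to-\kappa(\theta^*)<0$, so $\epsilon$ hits $0$ in finite time and vanishes linearly. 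A stable-manifold or cone argument then shows that an open wedge of initial gains near $\bK$ is captured, and flowing backward gives an open set in $\Sset$. If $\beta=\beta_c$, the double root is a saddle-node; I would show that the half-neighborhood on the appropriate side is still attracted, giving the open set claimed for the critical case. If $\beta>\beta_c$, then $\beta-\Bfun_\rho(\theta)$ is bounded below by a positive constant, so $\theta$ is driven out of the region $\theta>1$ before $\epsilon$ can vanish, which contradicts the existence of a collision.

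The main obstacle is Stage 2 combined with the critical case. The normal form must be exact enough to control the higher-order terms uniformly in $\theta$ on the blown-up chart, and the degenerate saddle-node at $\beta=\beta_c$ requires a center-manifold sign computation to verify attraction on an open sector. I would first derive the exact rational expression for $J$ from the explicit $2\times2$ Lyapunov solution, then check the sign of the quadratic coefficient at $\theta_c$ directly.
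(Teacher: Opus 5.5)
Your sufficiency sketch for $\beta<\beta_c$ follows the paper: the attracting root $\theta_+$, linear decay of the margin, and an open capture region. Stage~1 is essentially Lemma~\ref{lem:endpoint}. One small repair is needed there. Your left-eigenvector test misses the nilpotent limit $F_{\bK}=\bigl(\begin{smallmatrix}0&\mu\\0&0\end{smallmatrix}\bigr)$ with $\mu\ne0$: its only left eigenvector is orthogonal to $e_1$, yet $e^{F_{\bK}t}e_1=e_1$ is not square integrable. Use Fatou's lemma and require $x_0$ to lie in the full stable spectral subspace instead.

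The genuine gap is the necessity direction, i.e.\ forward completeness for $\gamma=0$ and for $\beta>\beta_c$. Knowing that $\theta_\sigma<0$ on $\{\theta>1\}$ only shows that the direction leaves that sector. It does not exclude a collision afterwards, because $\theta>1$ is \emph{not} equivalent to motion toward the boundary at a general direction. In these variables the radial coefficient is $G_0=e^2C[(1+\rho)\theta^2-2\theta+\beta]$, which in general changes sign on both sides of $\theta=1$. The missing key identity is $(1+\gamma r)G_0(r)/(e^2C)-r^2=\gamma P(r)$. It gives $G_0(r_*)=e^2Cr_*^2/(1+\gamma r_*)$, but only at roots $r_*$ of the direction cubic. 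This identity is also what justifies your unproved claim $\kappa(\theta^*)>0$ in the sufficiency part.

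To use the identity, you must show that a colliding trajectory has a limiting direction that is such a root. This requires five steps:
\begin{enumerate}
\item[(i)] Exclude an unbounded ratio $r=k/y$. Bounded cost controls only $k^2/y$, so $r$ may a priori grow like $y^{-1/2}$. This is an approach tangent to $\{y=0\}$ that your chart $\zeta=\theta\epsilon$ does not see. The paper shows $r_\sigma/r^3\to-M_{22}C<0$ uniformly, which yields barriers at $|r|=R_0$.
\item[(ii)] Show the gradient is then bounded, so $y\le L(T-\tau)$ and $\sigma\to\infty$. Your phrase ``before $\epsilon$ can vanish'' has no meaning here: in desingularized time, $y$ never vanishes.
\item[(iii)] Use a sign argument to show that the bounded direction converges to a root of $P$.
\item[(iv)] Apply the identity to get $G_0(r_*)\ge0$ whenever $1+\gamma r_*>0$. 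This covers every root when $\gamma=0$.
\item[(v)] Treat the degenerate root $r_*=0$ separately. It is present exactly when $A_0=0$, which can happen with $\gamma=0$, or with $\gamma\ne0$ and $\beta=0>\beta_c$ (e.g.\ $\rho\ge1/2$). There $G_0(r_*)=0$, and one needs the factorization $r_\sigma=r[-2M_{11}C+O(r)+O(y)]$, $y_\sigma/y=O(r)$ to show that $y$ has a positive limit.
\end{enumerate}

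Your $\gamma=0$ argument via a coercive $\zeta^2/\epsilon$ term cannot be the mechanism. The term $Sk^2/y$ with $S>0$ is present for every $\gamma$. What changes at $\gamma=0$ is the off-diagonal coupling in the metric $M$, and that is visible exactly in the flux identity.

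Finally, at $\beta=\beta_c$ the attracted set near the saddle-node is bounded by the strong stable manifold, not by the line $\theta=\theta_c$. The $O(y)$ term in $u_\sigma=-a_2u^2+O(u^3)+O(y)$ can dominate near $u=0$. The paper uses the explicit wedge $0<y<\delta_1u^2$ for this reason.
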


The criterion determines whether the system admits a collision
basin. At strict inequality, the existence of an open collision
basin persists under sufficiently small perturbations of the
system data. The equality case is included in
Theorem~\ref{thm:general}, with its attraction mechanism established
in Section~\ref{sec:proof}.

The criterion does not depend on the choice of the second column
of $U$. Reversing that column changes the signs of $\gamma$ and
$A_0$ but leaves $\rho$ and $\beta$ unchanged.
For fixed system data, replacing $x_0$ by $s x_0$, where
$s\ne0$, multiplies both $J$ and its gradient by $s^2$.
It therefore changes only the optimization time scale.

\section{Boundary normal form and proof of Theorem~\ref{thm:general}}
\label{sec:proof}
\subsection{Identification of every finite-time endpoint}
\begin{lemma}
\label{lem:endpoint}
If $T_{\max}<\infty$, the gain has a finite limit $\bK\in\partial\Sset$. In the orthogonal coordinates above, this limit necessarily has the form \eqref{eq:Fbar} with $a>0$. Consequently $e\ne0$ and \eqref{eq:boundarydata} uniquely determines the endpoint.
\end{lemma}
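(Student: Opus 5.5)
We plan to combine three ingredients: monotonicity of $J$ along the flow, a bound on the total path length, and an analysis of which boundary gains keep the cost from $x_0$ bounded. Along \eqref{eq:flow} we have $\frac{\dd}{\dd\tau}J(K(\tau))=-\norm{\nabla_KJ(K(\tau))}^2$. Hence $J(K(\tau))\le J(K_0)$ and
\[
 \int_0^{T_{\max}}\norm{\nabla_KJ}^2\,\dd\tau\le J(K_0).
\]
If $T_{\max}<\infty$, Cauchy--Schwarz gives
\[
 \int_0^{T_{\max}}\norm{\dot K}\,\dd\tau\le\sqrt{T_{\max}J(K_0)}<\infty.
\]
So $K(\tau)$ has finite length and converges to some finite $\bK$. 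Since $J$ is real analytic on $\Sset$, the flow could be continued through any limit point lying in $\Sset$. Maximality therefore forces $\bK\in\partial\Sset$.

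Next we identify $\bK$ from the cost bound. Write $K(\tau)=(k_1,k_2)$. The cost dominates $\int_0^\infty x^\transpose Qx\,\dd t\ge\lambda_{\min}(Q)\int_0^\infty\norm{e^{F_Kt}x_0}^2\dd t$. This quantity stays bounded by $J(K_0)$ along the trajectory. By Fatou's lemma together with pointwise convergence $e^{F_{K(\tau)}t}x_0\to e^{F_{\bK}t}x_0$, we get $\int_0^\infty\norm{e^{F_{\bK}t}x_0}^2\dd t<\infty$, and likewise for the input term. Thus $e^{F_{\bK}t}x_0\to0$: the orbit of $x_0$ under $F_{\bK}$ lies in the stable subspace of $F_{\bK}$. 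Because $\bK\in\partial\Sset$, $F_{\bK}$ has an eigenvalue with zero real part. Hence $x_0$ cannot span a full $2$-dimensional stable subspace. This leaves only one case: $F_{\bK}$ has real eigenvalues $-a<0$ and $\lambda$ with $\operatorname{Re}\lambda=0$, hence $\lambda=0$ (complex eigenvalues come in conjugate pairs), and $x_0=\sqrt\eta\,e_1$ is an eigenvector for $-a$. The pair with purely imaginary eigenvalues is excluded, because then no nonzero orbit decays. In coordinates, $F_{\bK}e_1=-ae_1$ and $\det F_{\bK}=0$, $\tr F_{\bK}=-a$. This gives $F_{\bK}=\begin{pmatrix}-a&c\\0&0\end{pmatrix}$ with $a>0$, which is \eqref{eq:Fbar}.

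Finally we read off the endpoint. The second row of $A-B\bK$ vanishes, that is, $A_{2j}=e\bK_j$. If $e=0$, the second row of $A$ would be zero, and $(A,B)$ with $B=(d,0)^\transpose$ would be uncontrollable, since $e_2^\transpose$ would annihilate $B$ and $AB$. So $e\ne0$, and $\bK=(A_{21}/e,A_{22}/e)=(p,q)$. Reading the first row gives $a$ and $c$ as in \eqref{eq:boundarydata}.

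The step we expect to be most delicate is the limit argument, which passes from bounded costs along $K(\tau)$ to decay of $e^{F_{\bK}t}x_0$. Fatou handles this cleanly because the integrands are nonnegative and converge pointwise in $t$. A secondary point needs care: excluding the possibility that the limit exists but the flow stops for another reason, such as finite-time blow-up of $\nabla J$ while remaining interior. The finite-length bound and the continuity of $\nabla J$ on $\Sset$ rule this out.
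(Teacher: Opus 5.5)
Your proposal is correct and follows essentially the same route as the paper. It uses the energy identity with Cauchy--Schwarz to obtain a finite limit on $\partial\Sset$, and Fatou's lemma to place $x_0$ in the stable spectral subspace of $F_{\bK}$, so the eigenvalues are $-a<0$ and $0$ with $e_1$ an eigenvector. It then excludes $e=0$ by controllability and reads off $(p,q)$, $a$, and $c$; the only cosmetic difference is that you note $e_2^\transpose$ would annihilate both $B$ and $AB$, whereas the paper observes that $A_{21}\ne0$ is forced and feedback cannot change the second row.
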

\begin{proof}
Along \eqref{eq:flow},
\begin{equation*}
 J'=-\norm{\nabla_KJ}^2=-\norm{K'}^2.
\end{equation*}
Since $J\ge0$, Cauchy--Schwarz gives, for $0\le\tau_1<\tau_2<T_{\max}$,
\begin{equation*}
 \norm{K(\tau_2)-K(\tau_1)}\le\sqrt{\tau_2-\tau_1}\sqrt{J(K(\tau_1))-J(K(\tau_2))}
 \le\sqrt{(\tau_2-\tau_1)J(K_0)}.
\end{equation*}
Thus $K$ has a finite limit. An interior limit permits local continuation of the analytic vector field by the standard continuation theorem for ordinary differential equations \cite[Corollary~2.15]{teschl2012}, so the limit is on the boundary. In particular, finite-time escape to infinity is impossible.

For each fixed physical time $t$, the matrix exponential depends continuously on $K$. Monotonicity of $J$, positive definiteness of $Q$, and Fatou's lemma \cite[Section~3A]{axler2020} imply
\begin{equation*}
 \int_0^\infty\norm{e^{F_{\bK}t}x_0}^2\,\dd t\le
 \frac{J(K_0)}{\lambda_{\min}(Q)}<\infty.
\end{equation*}
Because $F_{K(\tau)}$ is Hurwitz and converges to $F_{\bK}$,
all eigenvalues of $F_{\bK}$ have nonpositive real parts.
At least one has zero real part, since otherwise $\bK$ would
belong to $\Sset$. The Jordan decomposition \cite{horn2012}
then implies that square integrability requires $x_0$ to belong
to the strictly stable spectral subspace of $F_{\bK}$. A real $2\times2$ matrix on the boundary of the Hurwitz set has such a nontrivial subspace only when its eigenvalues are one strictly negative real number and zero. Its stable subspace is then a one-dimensional eigenspace. Since $x_0$ is parallel to $e_1$, the first column of $F_{\bK}$ equals $(-a,0)^\transpose$ with $a>0$, and its second diagonal entry is zero. This proves \eqref{eq:Fbar}.

If $e=0$, controllability requires $d\ne0$ and $A_{21}\ne0$, whereas feedback cannot change the second row of $A$. Equation~\eqref{eq:Fbar} is then impossible. If $e\ne0$, its second row gives $\bK=(p,q)$, and its first row gives $a,c$ in \eqref{eq:boundarydata}.
\end{proof}

\subsection{Exact cost and the induced metric}
Suppose $e\ne0$ and $a>0$. Introduce gain coordinates
\begin{equation*}
 \begin{gathered}
 k=-e(K_1-p),~~ y=e(K_2-q)-\gamma k,\\
 z=y+\gamma k,~~ D_b=a+y+(\gamma-\ell)k.
 \end{gathered}
\end{equation*}
Then
\begin{equation}
 F_K=\begin{pmatrix}-a+\ell k&c-\ell z\\ k&-z\end{pmatrix},~~
 \det F_K=ay,~~ -\tr F_K=D_b.
 \label{eq:FK}
\end{equation}
Thus the entire stabilizing domain is $y>0$, $D_b>0$.

\begin{proposition}[Exact rational representation]
\label{prop:normal}
On this domain, the accumulated state Gramian and the cost satisfy
\begin{align}
 X_K&=\frac{\eta}{2D_bay}
 \begin{pmatrix}z^2+ay&kz\\kz&k^2\end{pmatrix},\label{eq:Xexact}\\
 J(k,y)&=f_0+\frac{\eta}{2aD_b}
 \left(A_0k+S\frac{k^2}{y}+\frac{aR}{e^2}k^2\right),~~
 f_0=\frac{\eta w}{2a}.\label{eq:Jexact}
\end{align}
Writing $J_k=\partial J/\partial k$ and $J_y=\partial J/\partial y$,
with $k$ and $y$ treated as independent variables, the original
Euclidean gradient flow is represented by
\begin{equation}
 \binom{k'}{y'}=-M\binom{J_k}{J_y},~~
 M=e^2\begin{pmatrix}1&-\gamma\\-\gamma&1+\gamma^2\end{pmatrix}\succ0.
 \label{eq:metric}
\end{equation}
\end{proposition}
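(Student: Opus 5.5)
The proof is a direct verification in three parts: solve the Lyapunov equation \eqref{eq:X} explicitly for the $2\times2$ closed-loop matrix \eqref{eq:FK}, substitute into the trace formula \eqref{eq:grad}, and compute the Jacobian of the linear change of gain variables.

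\emph{Gramian.} With $x_0=\sqrt\eta\,e_1$ the right-hand side of \eqref{eq:X} is $\eta e_1e_1^\transpose$. For any Hurwitz $F\in\R^{2\times2}$ the Lyapunov equation $FX+XF^\transpose+\eta e_1e_1^\transpose=0$ has the closed-form solution
\[
 X=\frac{\eta}{2(-\tr F)\det F}\Bigl(\det F\,e_1e_1^\transpose+(F-\tr F\,I)e_1e_1^\transpose(F-\tr F\,I)^\transpose\Bigr),
\]
which can be checked using Cayley--Hamilton. Alternatively, one can treat the three unknown entries as a linear system and solve it. From \eqref{eq:FK}, $(F-\tr F\,I)e_1=(-z,\,k)^\transpose$ up to the sign convention: the first entry is $-a+\ell k-(-a+\ell k-z)=z$ and the second is $k$. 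Since $\det F=ay$ and $-\tr F=D_b$, this gives \eqref{eq:Xexact}.

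\emph{Cost.} Next compute $J=\tr[(Q+RK^\transpose K)X_K]$. Write $K=(p,q)+\delta K$, where $\delta K_1=-k/e$ and $\delta K_2=z/e$. Then $RK^\transpose K=R(p,q)^\transpose(p,q)+R\bigl[(p,q)^\transpose\delta K+\delta K^\transpose(p,q)\bigr]+R\,\delta K^\transpose\delta K$, so $Q+RK^\transpose K=W_0+(\text{cross terms})+R\,\delta K^\transpose\delta K$. Pairing with the matrix in \eqref{eq:Xexact}, whose outer-product part is $\eta(z,k)^\transpose(z,k)/(2D_bay)$ and whose remaining part is $\eta\,e_1e_1^\transpose/(2D_b)$, yields four contributions:
\begin{enumerate}
\item $W_0$ against $(z,k)$ gives $wz^2+2d_0zk+v_0k^2$. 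Substituting $z=y+\gamma k$ turns this into $wy^2+2yk(w\gamma+d_0)+Sk^2$, and dividing by $ay$ leaves $wy/a+2k(w\gamma+d_0)/a+Sk^2/(ay)$.
\item $W_0$ against $e_1e_1^\transpose$ gives $w$.
\item The term $R\,\delta K^\transpose\delta K$ gives $(R/e^2)(-kz+zk)^2=0$ on the outer-product part, and $Rk^2/e^2$ on the $e_1$ part.
\item The cross terms give $2R(-pk/e)$ on the $e_1$ part, and zero on the outer-product part because $\delta K(z,k)^\transpose=0$.
\end{enumerate}
Collecting, $w+wy/a=w(a+y)/a$. Since $D_b=a+y+(\gamma-\ell)k$, this equals $wD_b/a$ minus $w(\gamma-\ell)k/a$, and the $wD_b/a$ piece produces $f_0$. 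The remaining linear coefficient is $\bigl[2w\gamma+2d_0-w(\gamma-\ell)-2aRp/e\bigr]/a=A_0/a$, which matches \eqref{eq:Jexact}. This bookkeeping, in particular confirming that the linear term combines exactly into $A_0$ with the factor $a$ placed correctly, is the main obstacle. I would organize it by writing everything over the common denominator $2aD_by$.

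\emph{Metric.} The map $(K_1,K_2)\mapsto(k,y)$ is affine, with Jacobian $T=\begin{pmatrix}-e&0\\ \gamma e&e\end{pmatrix}$ (using $y=eK_2-eq+\gamma e(K_1-p)$). By the chain rule $\nabla_KJ=T^\transpose(J_k,J_y)^\transpose$, hence $(k',y')^\transpose=-TT^\transpose(J_k,J_y)^\transpose$. Computing $TT^\transpose=e^2\begin{pmatrix}1&-\gamma\\-\gamma&1+\gamma^2\end{pmatrix}$ gives \eqref{eq:metric}. This matrix has determinant $e^4>0$ and trace $e^2(2+\gamma^2)>0$, so $M\succ0$.
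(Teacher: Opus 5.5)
Your proposal is correct and follows the paper's three steps: obtain $X_K$, take the trace product with $Q+RK^\transpose K$, and compute the Jacobian, which coincides with the paper's $L$. In the trace step, your observation that $\delta K\,(z,k)^\transpose=0$ is exactly what collapses the paper's cross and quadratic terms to their $e_1$ parts.

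The only variation is in the Gramian step. You derive $X_K$ from the closed-form $2\times2$ Lyapunov solution via Cayley--Hamilton, whereas the paper verifies the candidate entrywise and invokes uniqueness; the two are equivalent. The stray $(-z,k)^\transpose$ should simply read $(z,k)^\transpose$, though the sign is immaterial inside the outer product.
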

\begin{proof}
Substitution of \eqref{eq:Xexact} into \eqref{eq:X} verifies the three scalar Lyapunov equations; uniqueness then identifies $X_K$. Taking its trace product with $Q+RK^\transpose K$ yields \eqref{eq:Jexact}. \ref{app:algebra} gives the algebraic simplification explicitly. The Jacobian of $(K_1,K_2)\mapsto(k,y)$ is
\begin{equation*}
 L=e\begin{pmatrix}-1&0\\\gamma&1\end{pmatrix}.
\end{equation*}
The chain rule gives $(k',y')^\transpose=-LL^\transpose\nabla_{(k,y)}J$, which is \eqref{eq:metric}.
\end{proof}
The Euclidean metric in the original gain coordinates is represented
by $M^{-1}$ in the $(k,y)$ coordinates. Its inverse $M$ therefore
appears in the transformed gradient flow.

\subsection{Direction of approach and radial motion}
To analyze approach to $(k,y)=(0,0)$, we introduce the ratio
$r=k/y$ and reparametrize optimization time by a variable $\sigma$.
The ratio describes the direction of approach, while the time
change gives a vector field that extends smoothly to $y=0$
for finite $r$ \cite{dumortier2006}. Specifically, set
\begin{equation*}
 r=\frac{k}{y},~ \frac{\dd\sigma}{\dd\tau}=\frac1y,~
 \alpha=\frac{\eta A_0}{2a^2},~ C=\frac{\eta S}{2a^2}>0,~
 \delta=\frac{A_0}{S}.
\end{equation*}
For bounded $r$ and $y\downarrow0$, differentiation of \eqref{eq:Jexact} before substituting $k=ry$ gives
\begin{equation*}
 J=f_0+\alpha k+Ck^2/y+O(y^2),~~
 J_k=\alpha+2Cr+O(y),~~ J_y=-Cr^2+O(y).
\end{equation*}
After substituting $k=ry$, the remainder terms extend smoothly
to $y=0$, and the estimates are uniform for $r$ in compact intervals. Using subscripts $\sigma$ to denote differentiation with respect
to the rescaled time, we obtain
\begin{equation}
 r_\sigma=H_0(r)+O(y),~~
 y_\sigma=y\bigl(G_0(r)+O(y)\bigr),~~ \tau_\sigma=y,
 \label{eq:desing}
\end{equation}
where
\begin{align}
 H_0(r)&=-e^2C P(r),~
 P(r)=(1+\gamma^2)r^3+3\gamma r^2+(2+\gamma\delta)r+\delta,\notag
\\
 G_0(r)&=e^2C\bigl[\gamma\delta+2\gamma r+(1+\gamma^2)r^2\bigr].\label{eq:Gdir}
\end{align}
An equilibrium direction must be tested together with its radial speed. Indeed,
\begin{equation*}
 (1+\gamma r)\frac{G_0(r)}{e^2C}-r^2=\gamma P(r).
\end{equation*}
For a root $r_*$ of $P$, the factor $1+\gamma r_*$ cannot vanish. Thus
\begin{equation}
 G_0(r_*)=\frac{e^2Cr_*^2}{1+\gamma r_*}.
 \label{eq:rootflux}
\end{equation}
A negative radial speed is possible exactly when $\gamma\ne0$ and $\theta_*=-\gamma r_*>1$. For $\gamma\ne0$, set $\theta=-\gamma r$. The direction equation becomes
\begin{equation}
 \theta_\sigma=e^2C(\theta-1)\bigl[\Bfun_\rho(\theta)-\beta\bigr]+O(y).
 \label{eq:thetaflow}
\end{equation}
We use this representation for $\theta>1$, the range corresponding
to negative radial speed at an equilibrium direction.

\subsection{Sufficiency, including the critical equality}
The function in \eqref{eq:B} satisfies
\begin{equation}
 \Bfun_\rho''(\theta)=-2(1+\rho)-\frac{2\rho}{(\theta-1)^3}<0.
 \label{eq:concavity}
\end{equation}
It tends to $-\infty$ at both ends of $(1,\infty)$. Its unique critical point satisfies \eqref{eq:thetac}; the derivative is positive near $1$ and negative at $3/2$, so $\theta_c\in(1,3/2)$.

If $\beta<\beta_c$, the equation $\Bfun_\rho(\theta)=\beta$ has two roots
$1<\theta_-<\theta_c<\theta_+$. At $(\theta_+,0)$ the angular eigenvalue is
$e^2C(\theta_+-1)\Bfun_\rho'(\theta_+)<0$, and the radial eigenvalue is strictly negative by \eqref{eq:rootflux}. Choose $\theta_L<\theta_+<\theta_R$ sufficiently close to $\theta_+$ so that the angular field points inward at the two sides and the radial coefficient is at most $-2g<0$. Smoothness gives $\varepsilon>0$ for which these properties persist, with radial coefficient at most $-g$, on
$[\theta_L,\theta_R]\times[0,\varepsilon]$. This rectangle is positively invariant, and every interior trajectory satisfies
\begin{equation}
 0<y(\sigma)\le y(0)e^{-g\sigma},~~
 T:=\int_0^\infty y(\sigma)\,\dd\sigma\le\frac{y(0)}g<\infty.
 \label{eq:timefinite}
\end{equation}
Here $\varepsilon$ is also chosen so that $D_b\ge a/2$ throughout the rectangle. The extended vector field is smooth on a neighborhood of this compact rectangle; the invariant boundary $y=0$ cannot be reached in finite $\sigma$, by uniqueness \cite[Theorem~2.2]{teschl2012}. Thus these solutions exist for all $\sigma\ge0$. The angular field has a fixed inward sign away from $\theta_+$ once $y$ is small, so its direction converges to $\theta_+$ and its gain converges to $\bK$. The trajectory remains in $\Sset$ for $\tau<T$ and converges
to $\bK\in\partial\Sset$ as $\tau\uparrow T$. Hence its maximal
existence time in $\Sset$ is $T$, and a finite-time boundary
collision occurs.

At equality, the boundary restriction has the quadratic angular degeneracy of a saddle-node normal form \cite{kuznetsov2004}. Attraction from $y>0$ follows from an explicit trapping region. If $\beta=\beta_c$, put $u=\theta-\theta_c$. Let $G(u,y)$ denote the smooth radial coefficient in the
$(u,y)$ coordinates, so that
$G(0,0)=G_0(-\theta_c/\gamma)<0$.
Equations~\eqref{eq:thetaflow} and \eqref{eq:concavity} give
\begin{equation*}
 \begin{gathered}
 u_\sigma=-a_2u^2+O(u^3)+O(y),~~ y_\sigma=yG(u,y),\\
 a_2=-\tfrac12e^2C(\theta_c-1)\Bfun_\rho^{\prime\prime}(\theta_c)>0,
 ~~ G(0,0)<0.
 \end{gathered}
\end{equation*}
Choose $\delta_1>0$ small enough to control the $O(y)$ term whenever $y\le\delta_1u^2$, and then choose $\varepsilon>0$ small enough that
\begin{equation}
 -2a_2u^2\le u_\sigma\le-\tfrac12a_2u^2,~~ G(u,y)\le-g<0
 \label{eq:wedgeest}
\end{equation}
on the curvilinear wedge
\begin{equation*}
 \mathcal W=\{0<u<\varepsilon,~ 0<y<\delta_1u^2\}.
\end{equation*}
On its upper curved boundary,
\begin{equation*}
 \frac{\dd}{\dd\sigma}(y-\delta_1u^2)
 =\delta_1u^2\left(G-\frac{2u_\sigma}{u}\right)
 \le\delta_1u^2(-g+4a_2u)<0
\end{equation*}
after decreasing $\varepsilon$. The side $u=\varepsilon$ is inward pointing as well. The lower bound in \eqref{eq:wedgeest} prevents $u$ from reaching zero in finite $\sigma$, while the upper bound implies $u\to0$. More explicitly, the differential inequalities imply
\begin{equation*}
 \frac{1}{u(0)^{-1}+2a_2\sigma}\le u(\sigma)
 \le\frac{1}{u(0)^{-1}+(a_2/2)\sigma}.
\end{equation*}
Choose the wedge small enough that $D_b\ge a/2$ on its closure. The smooth extended field is bounded there, and $y_\sigma/y=G$ is bounded. Neither $u$ nor $y$ reaches zero at a finite $\sigma$. The inward inequalities and continuation on the compact closure therefore give existence for every $\sigma\ge0$ \cite[Corollary~2.15]{teschl2012}. The radial estimate again gives \eqref{eq:timefinite}. The map from $(\theta,y)$ to the gain is nonsingular for $y>0$, so this wedge corresponds to a nonempty open set of colliding initial gains. This proves sufficiency at equality.

\subsection{Necessity for arbitrary approach directions}
Suppose a finite-time collision occurs, and let $T=T_{\max}<\infty$. Lemma~\ref{lem:endpoint} implies $(k,y)\to(0,0)$ with $y>0$. To use the direction equations, we must first exclude an unbounded ratio.

Write $M_{ij}$ for the $(i,j)$ entry of $M$.
In the joint limit $(k,y)\to(0,0)$ and $|r|=|k/y|\to\infty$,
exact differentiation of \eqref{eq:Jexact} gives
\begin{equation*}
 \frac{J_k}{r}\longrightarrow2C,~~
 \frac{J_y}{r^2}\longrightarrow-C,~~
 \frac{r_\sigma}{r^3}\longrightarrow-M_{22}C<0.
\end{equation*}
For clarity, write $d_1=\gamma-\ell$, $q_1=aR/e^2$ and $N=A_0k+Sk^2/y+q_1k^2$. The exact derivatives in \ref{app:algebra} give
\begin{align*}
 J_k/r&=\frac{\eta}{2aD_b^2}\left[(A_0/r+2S+2q_1k/r)D_b-d_1N/r\right],\\
 J_y/r^2&=\frac{\eta}{2aD_b^2}\left[-SD_b-N/r^2\right],
\end{align*}
where $N/r=A_0y+Sk+q_1ky$ and $N/r^2=A_0y/r+Sy+q_1y^2$. Since $D_b\to a>0$, both errors are bounded by a constant times $|k|+y+1/|r|$ on a sufficiently small neighborhood with $|r|\ge1$. Finally,
\begin{equation*}
 r_\sigma=-M_{11}J_k-M_{12}J_y+rM_{21}J_k+rM_{22}J_y.
\end{equation*}
Dividing by $r^3$ proves the last limit with the same uniformity. Hence there exist a neighborhood of the origin and $R_0<\infty$ on which $r_\sigma<0$ for $r\ge R_0$ and $r_\sigma>0$ for $r\le-R_0$. The colliding trajectory remains in this neighborhood after some $\tau_0<T$. The inward barriers then give
\begin{equation*}
 |r(\tau)|\le\max\{R_0,|r(\tau_0)|\},~~ \tau_0\le\tau<T.
\end{equation*}

The gradient and $y'$ are consequently bounded on a terminal interval. Since $y(\tau)\to0$ as $\tau\uparrow T$, boundedness of $y'$
gives $y(\tau)\le L(T-\tau)$ on this interval for some constant
$L>0$. Integrating $\dd\sigma/\dd\tau=1/y$ then shows that
$\sigma\to\infty$ as $\tau\uparrow T$. The bounded solution $r$ converges to a root $r_*$ of $P$.
Indeed, $P$ is a cubic polynomial and has finitely many real roots.
On any closed interval containing no root, the equation
$r_\sigma=H_0(r)+O(y)$ eventually has a fixed nonzero sign,
uniformly on that interval. If the lower and upper limits of $r$
were distinct, a root-free interval between them would have
to be crossed infinitely often in both directions, which is
impossible. The same sign argument excludes a limit that is
not a root.

If $G_0(r_*)>0$, then $y_\sigma>0$ eventually, contradicting $y\to0$. If $G_0(r_*)=0$, the simultaneous equations $H_0(r_*)=G_0(r_*)=0$, together with invertibility of $M$, force $r_*=0$ and $A_0=0$. In this case every nonconstant term in \eqref{eq:Jexact} contains $k^2$. Its exact desingularized field therefore factors near $(r,y)=(0,0)$ as
\begin{equation*}
 r_\sigma=r\bigl[-2M_{11}C+O(r)+O(y)\bigr],~~
 \frac{y_\sigma}{y}=O(r).
\end{equation*}
For sufficiently large $\sigma$, the coefficient multiplying $r$ is at most $-M_{11}C<0$. Integration gives $|r(\sigma)|\le |r(\sigma_0)|e^{-M_{11}C(\sigma-\sigma_0)}$, including the invariant case $r\equiv0$. The smooth factorization also gives $|y_\sigma/y|\le C_1|r|$ for a fixed $C_1$. Thus $\log y$ has a finite real limit, so $y$ has a strictly positive limit, another contradiction. We conclude that $G_0(r_*)<0$. Equation~\eqref{eq:rootflux} then requires $\gamma\ne0$, $\theta_*=-\gamma r_*>1$ and $\Bfun_\rho(\theta_*)=\beta$. Consequently $\beta\le\beta_c$. Together with the endpoint lemma and the sufficiency proof, this completes Theorem~\ref{thm:general}.

\subsection{Collision asymptotics and the limiting cost}
\begin{proposition}
\label{prop:rates}
For every finite-time collision at $T$, there is a nonzero limiting direction $r_*$ with $G_*=G_0(r_*)<0$, and
\begin{align}
 y(\tau)=-G_*(T-\tau)+o(T-\tau),~~k(\tau)=r_*y(\tau)+o(y),\label{eq:rates}\\
 \lambda_{\rm slow}(F_K)=-y+o(y),~~
 \lambda_{\min}(X_K)=\frac{\eta r_*^2}{2a^2}y+o(y),\notag
\\
 J(K(\tau))=f_0+(\alpha r_*+Cr_*^2)y+o(y).\label{eq:costrate}
\end{align}
Here $\lambda_{\rm slow}(F_K)$ denotes the eigenvalue of $F_K$
that converges to zero as $\tau\uparrow T$.
The coefficient $\alpha r_*+Cr_*^2$ is strictly positive, and $X_{K(\tau)}\succ0$ for every $0\le\tau<T$. If $A_0\ne0$, there exists a stabilizing gain with cost strictly
below $f_0$. If $A_0=0$, the value $f_0$ is attained by every stabilizing gain with $k=0$.
\end{proposition}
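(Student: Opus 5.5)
The starting point is the necessity argument in the proof of Theorem~\ref{thm:general}. Every finite-time collision has $r(\tau)\to r_*$, where $r_*$ is a root of $P$ with $G_*=G_0(r_*)<0$, and $\sigma\to\infty$ as $\tau\uparrow T$. Since $G_0(0)=e^2C\gamma\delta$ and $P(0)=\delta$, the choice $r_*=0$ would force $\delta=0$ and then $G_*=0$. Hence $r_*\ne0$.

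\emph{Radial rate.} Along the trajectory,
\[
 \frac{\dd y}{\dd\tau}=\frac{y_\sigma}{\tau_\sigma}=G_0(r)+O(y).
\]
Because $r\to r_*$ and $y\to0$, this gives $y'(\tau)\to G_*$. Integrating from $\tau$ to $T$ with $y(T^-)=0$ yields $y(\tau)=-G_*(T-\tau)+o(T-\tau)$. The relation $k=ry$ with $r\to r_*$ gives $k=r_*y+o(y)$.

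\emph{Eigenvalue.} By \eqref{eq:FK}, the eigenvalues of $F_K$ have product $ay$ and sum $-D_b$, with $D_b\to a$. The slow eigenvalue is therefore
\[
 \lambda_{\rm slow}=\frac{-ay}{D_b}\bigl(1+O(y)\bigr)=-y+o(y),
\]
where $D_b=a+y+(\gamma-\ell)k=a+O(y)$.

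\emph{Gramian.} From \eqref{eq:Xexact},
\[
 \det X_K=\Bigl(\frac{\eta}{2D_bay}\Bigr)^2 ay\,k^2,
 \qquad
 \tr X_K\to\frac{\eta}{2a^2}\bigl(z^2/y+a\bigr).
\]
Since $z=y+\gamma k=O(y)$, the trace converges to $\eta/(2a)$. Then
\[
 \lambda_{\min}=\frac{\det X_K}{\tr X_K}\bigl(1+o(1)\bigr)
 =\frac{\eta^2k^2/(4a^2y)}{\eta/(2a)}\bigl(1+o(1)\bigr)
 =\frac{\eta k^2}{2ay}\bigl(1+o(1)\bigr).
\]
Using $D_b\to a$ and $k=r_*y$, this equals $\frac{\eta r_*^2}{2a^2}y+o(y)$, after checking the powers of $a$ carefully against \eqref{eq:Xexact}. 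For positivity before collision, \eqref{eq:Xexact} shows that $X_K$ is singular exactly when $k=0$. I would show that a colliding trajectory never has $k(\tau)=0$ as follows. The line $k=0$ is not invariant when $A_0\neq0$: there $k'=-e^2(J_k-\gamma J_y)$ with $J_k=\eta A_0/(2aD_b)$ and $J_y=0$. This alone does not rule out crossing, however. The cleaner route is a direct argument. On the set $k=0$ we have $x_0$ in an invariant eigenline of $F_K$. I expect this to be the main obstacle. The plan is to use the exact Gramian and its determinant, $\det X_K\propto k^2/y$. If $k(\tau_1)=0$ at some time, that is only an instantaneous singularity, and it does not contradict "$X_K\succ0$ for every $\tau$" unless the trajectory actually meets $k=0$. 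I would try to prove that $k$ has a constant sign along the whole flow by showing that the sign of $k'$ on the line $k=0$ agrees with the sign of $r_*$. The fallback is to restrict the statement to trajectories in the trapping regions of the sufficiency proof, where $\theta$, and hence the sign of $k$, is fixed.

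\emph{Cost.} Substituting $k=r_*y+o(y)$ into $J=f_0+\alpha k+Ck^2/y+O(y^2)$ gives \eqref{eq:costrate}. Positivity of the coefficient comes from \eqref{eq:rootflux} and $P(r_*)=0$. I would write $\alpha r_*+Cr_*^2=\frac{\eta S}{2a^2}\,r_*(\delta+r_*)$ and eliminate $\delta$ using $P(r_*)=0$, that is,
\[
 \delta(1+\gamma r_*)=-r_*\bigl[(1+\gamma^2)r_*^2+3\gamma r_*+2\bigr].
\]
With $\theta_*=-\gamma r_*>1$, this should reduce to a quotient whose sign is fixed by $1+\gamma r_*<0$. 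Alternatively, strict positivity follows from $J$ decreasing to its limit: $J(K(\tau))>\lim J=f_0$, which gives a nonnegative coefficient, and it is nonzero because $r_*\ne0$ and the cubic relation excludes $\delta=-r_*$.

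For the last two claims, $A_0\ne0$ allows taking $k$ small with sign opposite to $A_0$ and $y$ fixed. Then
\[
 J-f_0=\frac{\eta}{2aD_b}\,k\bigl(A_0+O(k)\bigr)<0.
\]
If $A_0=0$, then $k=0$ makes every nonconstant term in \eqref{eq:Jexact} vanish, so $J=f_0$.
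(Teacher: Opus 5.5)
\textbf{Gap: positivity of $X_{K(\tau)}$ along the whole trajectory.} Your proposal leaves open the claim $X_{K(\tau)}\succ0$ for every $\tau<T$, and the route you sketch cannot close it. On $k=0$ you correctly get $k'=-e^2\eta A_0/(2aD_b)$. If this sign agrees with that of $r_*$, then crossings of $k=0$ go \emph{into} the side containing $r_*$, which is exactly the crossing you need to rule out. This agreement holds whenever $\beta>0$, because then $\operatorname{sign}A_0=\operatorname{sign}\gamma=-\operatorname{sign}r_*$. It therefore holds throughout the triangular family, and the local sign of $k'$ on $k=0$ cannot exclude the crossing. The trapping-region fallback proves the claim only for some colliding trajectories, not for every one.

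The missing observation is that $J(0,y)=f_0$ for every $y>0$ and every value of $A_0$. Each nonconstant term in \eqref{eq:Jexact} carries a factor $k$; you used this only when $A_0=0$. Along a colliding trajectory $J$ is nonincreasing with limit $f_0$. Suppose $J(K(\tau_1))=f_0$ at some $\tau_1<T$. Then $\int_{\tau_1}^{T}\norm{\nabla_KJ}^2\,\dd\tau=0$, so the trajectory would be stationary and could not collide. Hence $J(K(\tau))>f_0$, so $k(\tau)\ne0$ for all $\tau<T$, and $\det X_K=\eta^2k^2/(4D_b^2ay)>0$ gives $X_K\succ0$. Your ``alternative'' positivity argument for the cost coefficient also needs this strictness, since monotone convergence alone gives only $J\ge f_0$.

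\textbf{Remaining steps.} These agree with the paper, apart from one slip. In the Gramian computation, $\det X_K\approx\eta^2k^2/(4a^3y)$, not $\eta^2k^2/(4a^2y)$, so the intermediate quotient is $\eta k^2/(2a^2y)$. Your final formula is nevertheless correct.

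Your elimination of $\delta$ through $P(r_*)=0$ is a valid substitute for the paper's step, which pairs $-M(\alpha+2Cr_*,-Cr_*^2)^\transpose=G_*\xi_*$ with $M^{-1}\xi_*$. Carried out, it yields $\alpha r_*+Cr_*^2=-Cr_*^2\bigl[(1+\gamma r_*)^2+r_*^2\bigr]/(1+\gamma r_*)$. This equals $-G_*\xi_*^\transpose M^{-1}\xi_*$ and is positive because $1+\gamma r_*=1-\theta_*<0$. The paper's pairing is shorter; yours gives the explicit expression.
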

\begin{proof}
The preceding proof gives $r\to r_*$ and $y'\to G_*<0$, which yields \eqref{eq:rates}. The characteristic polynomial in \eqref{eq:FK} is $\lambda^2+D_b\lambda+ay$, with $D_b\to a$, proving the expansion of the slow eigenvalue. From \eqref{eq:Xexact},
\begin{equation}
 X_{11}\to\frac\eta{2a},~~
 \det X_K=\frac{\eta^2k^2}{4D_b^2ay}
 =\frac{\eta^2r_*^2}{4a^3}y+o(y),
 \label{eq:detX}
\end{equation}
which proves the Gramian expansion. At a direction root, the limiting velocity is $G_*\xi_*$, where $\xi_*=(r_*,1)^\transpose$. Taking the scalar product of
$-M(\alpha+2Cr_*,-Cr_*^2)^\transpose=G_*\xi_*$ with $M^{-1}\xi_*$ gives
\begin{equation*}
 \alpha r_*+Cr_*^2=-G_*\xi_*^\transpose M^{-1}\xi_*>0.
\end{equation*}
Equation~\eqref{eq:costrate} follows from \eqref{eq:Jexact}. Descent and this limit imply $J(K(\tau))>f_0$ for all $\tau<T$: equality at an earlier time would force the nonnegative gradient energy on the remaining interval to vanish, making the trajectory stationary. Since $J(0,y)=f_0$, we have $k(\tau)\ne0$ on the entire trajectory, and \eqref{eq:detX} gives $X_K\succ0$. If $A_0\ne0$, a sufficiently small displacement from any $(0,y_0)$, $y_0>0$, in the opposite sign of $A_0$ gives a stable gain with cost below $f_0$. If $A_0=0$, every remaining term in \eqref{eq:Jexact} is nonnegative and equality holds at $k=0$.
\end{proof}
Thus the flow reaches the stability boundary as excitation of the
marginal mode vanishes, while the observed cost remains finite.

\section{Explicit parameter regions and global dynamics}
\label{sec:family}

We now specialize to the family
\begin{equation}
 A=\begin{pmatrix}-1&c\\0&b\end{pmatrix},~~ B=e_2,~~
 Q=I_2,~ R=1,~ x_0=e_1,
 \label{eq:family}
\end{equation}
where $b\in\R$, $c\in\R\setminus\{0\}$,
$e_1=(1,0)^\transpose$, $e_2=(0,1)^\transpose$, and $I_2$ is
the $2\times2$ identity matrix. The pair $(A,B)$ is controllable
because $\det[B,AB]=-c\ne0$. Define
\begin{equation*}
 h:=c^2,~~ v:=b^2,~~ D:=h+1+v,
\end{equation*}
and
\begin{equation}
 \Efun(h,v):=-27h^2+(v+1)(4v^2-28v-59)h-32(v+1)^3.
 \label{eq:E}
\end{equation}
For this family, the collision criterion reduces to a polynomial
inequality in $(h,v)$. We also classify every maximal trajectory:
it either converges to the optimal stabilizing gain or reaches
the stability boundary in finite optimization time.

\begin{theorem}[Global dichotomy and parameter classification]
\label{thm:global}
For the family \eqref{eq:family}, every stabilizing initial gain
has exactly one of the following two outcomes:
\begin{enumerate}
\item
The trajectory exists for all $\tau\ge0$, enters the sublevel
$J<1/2$ at a finite optimization time, and converges to the unique
stabilizing Riccati gain $K_\star$.
\item
The trajectory has finite maximal optimization time $T$, and
$K(\tau)\to(0,b)$ with $J(K(\tau))\downarrow1/2$ as
$\tau\uparrow T$.
\end{enumerate}
All maximal trajectories have bounded gains. Moreover,
\begin{align}
 \Efun(h,v)<0
 &~\Longleftrightarrow~
 \text{every stabilizing initialization converges to }K_\star,
 \label{eq:globalsafe}\\
 \Efun(h,v)\ge0
 &~\Longleftrightarrow~
 \text{a nonempty open collision basin exists in }\Sset.
 \label{eq:globalbad}
\end{align}
The collision region is equivalently
\begin{equation}
 v\ge\frac{25}{2},~~ h_-(v)\le h\le h_+(v),
 \label{eq:interval}
\end{equation}
where
\begin{equation}
 h_\pm(v):=\frac{v+1}{54}
 \left(4v^2-28v-59
 \pm\sqrt{(2v-25)(2v-1)^3}\right).
 \label{eq:hpm}
\end{equation}
\end{theorem}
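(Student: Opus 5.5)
With the data of \eqref{eq:family} we have $U=I_2$, $\eta=1$, $d=0$, $e=1$, so \eqref{eq:boundarydata} gives $p=0$, $q=b$, $a=1$, $\ell=0$ and $\gamma=c$. Then $w=1$, $d_0=0$, $v_0=1+v$, and hence
\[
 S=D,~~ A_0=c,~~ f_0=\tfrac12,~~ \rho=1/h,~~ \beta=h/D.
\]
The proof has four steps. First, translate the criterion of Theorem~\ref{thm:general} into the polynomial $\Efun$. Second, prove coercivity and sublevel compactness from the exact representation \eqref{eq:Jexact}. Third, locate the critical points. Fourth, rerun the necessity argument of Section~\ref{sec:proof} with $T=\infty$ to rule out the remaining alternative.

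\emph{Step 1 (parameter region).} Since $e\ne0$, $a=1>0$ and $\gamma=c\ne0$, part~2 of Theorem~\ref{thm:general} applies. Collision exists if and only if $h/D\le\Bfun_{1/h}(\theta_c)$. By concavity \eqref{eq:concavity}, this is equivalent to the equation $\Bfun_{1/h}(\theta)=h/D$ having a root $\theta>1$. Clearing denominators gives the cubic
\[
 (\theta-1)(2\theta-\theta^2)h D-\theta^3D-h^2(\theta-1)=0.
\]
I will compute its discriminant in $\theta$, which should equal a positive factor times $\Efun(h,v)$, and then check that the relevant double root lies in $(1,3/2)$. The boundary case $\beta=\beta_c$ is exactly the double-root locus, and strict inequality corresponds to two roots in $(1,\infty)$; this gives $\Efun\ge0$ for collision and $\Efun<0$ otherwise. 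Viewing $\Efun$ as a quadratic in $h$ then gives the roots $h_\pm(v)$ in \eqref{eq:hpm}. Its discriminant is $(v+1)^2(2v-25)(2v-1)^3$, so real roots require $v\ge25/2$ (since $v\ge0$). Positivity of $h_-$ there follows because the product of the roots is $32(v+1)^3/27>0$ and their sum is positive. This yields \eqref{eq:interval}.

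\emph{Step 2 (coercivity and the threshold $1/2$).} From \eqref{eq:Jexact},
\[
 J=\tfrac12+\frac{ck+Dk^2/y+k^2}{2D_b}.
\]
I will show the following.
\begin{enumerate}
\item $J\to\infty$ as $\norm{K}\to\infty$ in $\Sset$.
\item $J\to\infty$ at boundary points other than $\bK=(0,b)$; there $D_b\to0$ with a nonvanishing numerator, or $y\to0$ with $k\ne0$.
\item Near $\bK$, minimizing the numerator over $k$ gives $J\ge\tfrac12-c^2y/(8DD_b)\to\tfrac12$.
\end{enumerate}
Hence for every $\epsilon>0$ the sublevel set $\{J\le\tfrac12-\epsilon\}$ is a compact subset of $\Sset$. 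More generally, the closure of $\{J\le J(K_0)\}$ in $\R^{1\times2}$ is contained in $\Sset\cup\{\bK\}$ and is bounded. This gives boundedness of all maximal trajectories, since $J$ decreases along the flow.

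\emph{Step 3 (critical points).} At a critical point, $k\ne0$: on the line $k=0$ we have $J_k=c/(2D_b)\ne0$. So $X_K\succ0$ by \eqref{eq:Xexact}, and \eqref{eq:grad} forces $RK=B^\transpose P_K$. Substituting into \eqref{eq:P} shows that $P_K$ solves the Riccati equation, so $K=K_\star$ by uniqueness of the stabilizing solution. Since $A_0=c\ne0$, Proposition~\ref{prop:rates} gives $J(K_\star)=\min J<\tfrac12$.

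\emph{Step 4 (dichotomy).} If $T<\infty$, Lemma~\ref{lem:endpoint} and Proposition~\ref{prop:rates} give $K\to(0,b)$ and $J\downarrow\tfrac12$, with $J>\tfrac12$ along the way.

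Now suppose $T=\infty$. If $J$ ever drops below $\tfrac12$, the trajectory stays in a compact sublevel set inside $\Sset$. By Step 3 the only critical point is $K_\star$, and analyticity and the Łojasiewicz property of $J$ (or just compactness with a unique critical point) give $K\to K_\star$.

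Otherwise $J\ge\tfrac12$ for all time. The $\omega$-limit set is nonempty (by boundedness), connected, and contained in $\{K_\star\}\cup\{\bK\}$. It cannot contain $K_\star$, since then $J$ would eventually fall below $\tfrac12$. So $K\to\bK$ as $\tau\to\infty$, and in particular $\sigma\to\infty$.

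The necessity argument of Section~\ref{sec:proof} used finite $T$ only to obtain $\sigma\to\infty$, so it applies verbatim. The ratio $r$ stays bounded and converges to a root $r_*$ with $G_0(r_*)<0$. The case $G_0=0$ is excluded because $A_0\ne0$, and $G_0>0$ contradicts $y\to0$. Then $y'\to G_*<0$, which forces $y$ to reach $0$ in finite $\tau$, a contradiction.

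This proves the dichotomy and, together with Step~1 and part~2 of Theorem~\ref{thm:general}, the equivalences \eqref{eq:globalsafe}--\eqref{eq:globalbad}. I expect the main obstacle to be Step~1: eliminating $\theta$ cleanly to obtain exactly $\Efun$ with a positive cofactor. A secondary difficulty is the uniform lower bound near $\bK$ that makes the threshold $\tfrac12$ sharp.
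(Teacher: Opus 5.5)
\textbf{Main gap: Step 2 is false.} $J$ is not coercive on $\Sset$. Take $k=0$, $z>0$, i.e.\ $K=(0,b+z)$. Then $y=z>0$ and $d_b=1+z>0$, so these gains are stabilizing. The numerator $F$ in \eqref{eq:Jfamily} vanishes, so $J\equiv\tfrac12$ along an unbounded line in $\Sset$. This is the converse half of Lemma~\ref{lem:compact}: $\{J\le\nu\}$ is noncompact for every $\nu\ge\tfrac12$.

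Consequently, the closure of $\{J\le J(K_0)\}$ is unbounded whenever $J(K_0)\ge\tfrac12$, which is exactly the regime Step~4 must handle. Cost descent alone therefore does not prove the theorem's claim that all maximal trajectories have bounded gains. Nor does it give a nonempty $\omega$-limit set for a global trajectory that stays in $J\ge\tfrac12$. A priori such a trajectory could drift to infinity near this line. There the gradient norm $|c|/(2d_b)$ tends to zero, so no gradient lower bound rules the drift out either.

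The missing ingredient is a separate radial estimate. In the isometric coordinates $(k,z)$, the function $ck+Dk^2/y$ is homogeneous of degree one and $k^2$ is homogeneous of degree two. Euler's identity then gives \eqref{eq:radialV}: for $V=k^2+z^2$, $V'=-(F+d_bk^2)/d_b^2$. So $V$ is nonincreasing as long as $F\ge0$, that is, as long as $J\ge\tfrac12$. With this, a trajectory stays in its initial ball until it enters $J<\tfrac12$, and your $\omega$-limit argument goes through.

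You also need a direct proof that $\{J\le\tfrac12-\epsilon\}$ is compact, since you derived it from coercivity. The condition $F\le-2\epsilon d_b<0$ forces $0<-ck<hy/(y+D)$, hence $|k|<|c|$. It also forces $1<d_b\le h/(8\epsilon)$ and $y>2D\epsilon/h$, as in \eqref{eq:compactbounds}.

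\textbf{Smaller issues in Step 1.} First, for $0\le v\le\tfrac12$ the discriminant $(v+1)^2(2v-25)(2v-1)^3$ is nonnegative. So $\Efun(\cdot,v)$ does have real roots there, and your sentence that real roots require $v\ge25/2$ is wrong. The conclusion survives, because these roots are negative (their sum $(v+1)(4v^2-28v-59)/27$ is negative), so $\Efun<0$ for $h>0$.

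Second, to identify $\{\Efun\ge0\}$ with $\{\beta\le\beta_c\}$, you must exclude real roots of your cubic outside $(1,\infty)$ other than the one in $(0,1)$. Checking only the double root at $\theta_c$ does not do this. Your cubic equals $hD(\theta-1)[\Bfun_\rho(\theta)-\beta]=D\,p_h(-\theta)$. It has no roots with $\theta\le0$ and none at $\theta=1$. On $(0,1)$, $\Bfun_\rho$ is strictly increasing from $0$ to $+\infty$, so there is exactly one root there. The paper achieves the same with the AM--GM argument in $\zeta=s+1$.

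\textbf{Comparison of Step 4 with the paper.} Once boundedness is repaired, your exclusion of $K\to\bK$ as $\tau\to\infty$ is a valid alternative to the paper's route. You rerun the necessity argument: $r$ stays bounded, $r\to r_*$ with $G_0(r_*)<0$ because $A_0=c\ne0$, and then $y'\to G_*<0$ forces $y$ to hit zero in finite time. The paper instead proves a uniform gradient lower bound near $\bK$ (Lemma~\ref{lem:boundaryseq}) and concludes $J'\le-m^2$. The paper's argument is more elementary and avoids the direction analysis.
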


The parameter boundary is included in the collision region.
The global dichotomy strengthens the general criterion:
for this family, absence of finite-time collision implies
convergence from every stabilizing initial gain.

\subsection{Direction roots and the discriminant}

For \eqref{eq:family}, the data in
Section~\ref{sec:criterion} reduce to
\begin{equation*}
 \begin{gathered}
 a=e=1,~ p=0,~ q=b,~ \gamma=c,\\
 S=D,~ A_0=c,~ \rho=1/h,~ \beta=h/D.
 \end{gathered}
\end{equation*}
For the polynomial calculation, set $s:=ck/y=-\theta$.
Equations~\eqref{eq:desing}--\eqref{eq:Gdir} become
\begin{equation*}
 s_\sigma=-\frac{D}{2h}p_h(s)+O(y),~~
 y_\sigma=y\left[\frac{D}{2h}q_h(s)+O(y)\right],
\end{equation*}
where
\begin{align}
 p_h(s)&:=(1+h)s^3+3hs^2+h(2+\beta)s+h\beta,
 \label{eq:ph}\\
 q_h(s)&:=(1+h)s^2+2hs+h\beta.
 \notag
\end{align}
Their dependence on $v$ through $\beta$ is suppressed in the
notation. The identity
\begin{equation*}
 p_h(s)=(s+1)q_h(s)-s^2
\end{equation*}
gives, at every root $s_*$ of $p_h$,
\begin{equation*}
 q_h(s_*)=\frac{s_*^2}{s_*+1}.
\end{equation*}
Neither $0$ nor $-1$ is a root. Hence the radial coefficient
$Dq_h(s_*)/(2h)$ is negative exactly when $s_*<-1$.
We call such a root an inward direction.

All coefficients of $p_h$ are positive, so it has no
nonnegative real root. It has a root in $(-1,0)$ because
$p_h(-1)=-1$ and $p_h(0)>0$. If all three roots are real,
counted with multiplicity, exactly two lie below $-1$.
To prove this, introduce the shifted variable $\zeta:=s+1$.
Then
\begin{equation*}
 p_h(\zeta-1)
 =(1+h)\zeta^3-3\zeta^2
 +[3-h(1-\beta)]\zeta-1.
\end{equation*}
The three $\zeta$-roots have product $1/(1+h)$ and sum
$3/(1+h)$, and one lies in $(0,1)$. They cannot all be positive:
the arithmetic--geometric mean inequality would require
\begin{equation*}
 \frac{1}{1+h}\le\frac{1}{(1+h)^3},
\end{equation*}
contrary to $h>0$. The other two roots are therefore negative.
A triple root is impossible by the same argument. If a repeated
root occurs, it must be the pair of negative $\zeta$-roots.

For a real cubic, a positive discriminant gives three distinct
real roots, a negative discriminant gives one real root, and a
zero discriminant gives a repeated root
\cite[Propositions~4.3 and~4.5]{basu2006}.
A nonnegative discriminant is therefore equivalent to the
existence of an inward direction. When the discriminant is
positive, there are two inward roots, and the leftmost
$s$-root is attracting in the desingularized flow. When it is
zero, the double inward root is covered by the critical wedge
construction in Section~\ref{sec:proof}.

Direct expansion, detailed in \ref{app:disc}, gives
\begin{equation}
 \operatorname{disc}(p_h)=\frac{h^3}{D^3}\Efun(h,v).
 \label{eq:discriminant}
\end{equation}
Theorem~\ref{thm:general} thus shows that a finite-time collision
exists if and only if $\Efun(h,v)\ge0$, and that the colliding
initial gains contain a nonempty open set.

As a quadratic in $h$, the polynomial \eqref{eq:E} has
discriminant
\begin{equation*}
 (v+1)^2(2v-25)(2v-1)^3.
\end{equation*}
For $0\le v\le1/2$, its linear coefficient is negative, so
$\Efun(h,v)<0$ for every $h>0$. For $1/2<v<25/2$, the
discriminant is negative and the quadratic is again strictly
negative. For $v\ge25/2$, its two roots, counted with
multiplicity, are the positive values in \eqref{eq:hpm}.
Since the quadratic opens downward, it is nonnegative exactly
between these roots. This proves \eqref{eq:interval}.
At the smallest admissible value $v=25/2$, the interval reduces
to $h=54$.

\subsection{The threshold for compact cost sublevels}

Write the gain in isometric coordinates as
\begin{equation*}
 K=(-k,b+z),~~ y=z-ck,~~ d_b:=1+z=1+y+ck.
\end{equation*}
Here $d_b$ is the specialization of $D_b$ in \eqref{eq:FK}.
The stabilizing domain is $y>0$, $d_b>0$, and
\eqref{eq:Jexact} becomes
\begin{equation}
 J=\frac12+\frac{F}{2d_b},~~
 F:=ck+k^2+\frac{Dk^2}{y}.
 \label{eq:Jfamily}
\end{equation}
The scalar $F$ is the numerator of the cost correction.
All derivatives in this subsection are taken in the
isometric $(k,z)$ coordinates. With $r=k/y$,
\begin{equation}
 J_k\big|_z=\frac{c+2k+2Dr+Dcr^2}{2d_b},~~
 J_z\big|_k=-\frac{Dr^2}{2d_b}-\frac{F}{2d_b^2},
 \label{eq:isograd}
\end{equation}
where the vertical bar indicates the coordinate held fixed.
Thus $k'=-J_k|_z$ and $z'=-J_z|_k$.

\begin{lemma}[Threshold for compact cost sublevels]
\label{lem:compact}
For every $\nu\in\R$, the sublevel
$\{K\in\Sset:J(K)\le\nu\}$ is compact in the Euclidean gain
space if and only if $\nu<1/2$, with the empty set regarded
as compact. Every nonempty sublevel below this threshold
is separated from $\partial\Sset$ by a positive distance.
If $J(K)\le1/2-\varepsilon$, where $\varepsilon>0$, then
\begin{equation}
 \max_{\lambda\in\spec(F_K)}\operatorname{Re}\lambda
 \le-\min\left\{\frac12,
 \frac{2D\varepsilon}{h+2D\varepsilon}\right\}.
 \label{eq:marginbound}
\end{equation}
\end{lemma}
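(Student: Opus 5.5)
The plan is to work entirely in the isometric coordinates $(k,z)$ with $y=z-ck$ and $d_b=1+y+ck$, and to use the exact representation \eqref{eq:Jfamily}, $J=\tfrac12+F/(2d_b)$. Since $d_b>0$ on $\Sset$, the condition $J\le\nu=\tfrac12-\varepsilon$ is equivalent to $F\le-2\varepsilon d_b<0$. Every estimate will be extracted from this single inequality together with the domain constraints $y>0$ and $d_b>0$.

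\emph{Sharpness of the threshold.} On the line $k=0$ we have $F=0$, so $J=1/2$ at every gain $(0,b+y)$ with $y>0$. As $y\downarrow0$ these gains converge to $(0,b)\in\partial\Sset$. Hence for every $\nu\ge1/2$ the sublevel is not closed in the gain space, and so it is not compact. For $\nu<1/2$ the sublevel is closed relative to $\Sset$, by continuity of $J$. Compactness will therefore follow once I show that the sublevel is bounded and stays a positive distance from $\partial\Sset=\{y=0\}\cup\{d_b=0\}$.

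\emph{A priori bounds.} From $F<0$ we get $ck<0$ and $|c||k|>k^2+Dk^2/y$, so $|k|<|c|\,y/(y+D)$. Substituting into $d_b$ and using $D-h=1+v$ gives
\[
 d_b>1+y-\frac{hy}{y+D}=1+\frac{y(y+1+v)}{y+D}>1 .
\]
This separates the sublevel from the face $d_b=0$. Dropping the term $k^2\ge0$ and minimizing the remaining quadratic in $k$ gives $F\ge ck+Dk^2/y\ge-hy/(4D)$. Combined with $F\le-2\varepsilon d_b$, this yields
\[
 \frac{y}{d_b}\ge\frac{8D\varepsilon}{h},\qquad\text{and hence}\qquad y\ge\frac{8D\varepsilon}{h},
\]
which separates the sublevel from the face $y=0$. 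For boundedness, $|k|<|c|$ gives $F\ge ck>-h$, so $2\varepsilon d_b<h$. Then $y=d_b-1-ck<h/(2\varepsilon)+h$ and $|k|<|c|$, so the set is bounded. Both boundary faces are zero sets of affine functions of $(k,z)$, and the coordinates are isometric, so these strict lower bounds translate into a positive Euclidean distance from $\partial\Sset$.

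\emph{Margin bound.} By \eqref{eq:FK} with $a=1$, the characteristic polynomial is $\lambda^2+d_b\lambda+y$. If the roots are complex, their real part is $-d_b/2<-1/2$. If the roots are real, the slow root is
\[
 -\frac{2y}{d_b+\sqrt{d_b^2-4y}}\le-\frac{y}{d_b}\le-\frac{8D\varepsilon}{h}.
\]
The margin in \eqref{eq:marginbound} then follows because $8D\varepsilon/h\ge 2D\varepsilon/(h+2D\varepsilon)$. The step I expect to need the most care is the lower bound $d_b>1$, since it depends on the exact cancellation $D-h=1+v$; without it the complex-root case and the face $d_b=0$ would not be controlled. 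I will also check the degenerate cases, namely $d_b^2=4y$ and the possibility that the sublevel is empty, which is compact by convention.
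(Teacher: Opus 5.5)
Your proof is correct and follows essentially the paper's route: the same isometric coordinates, the same reduction to $F\le-2\varepsilon d_b$, the same sign argument giving $0<-ck<hy/(y+D)$ and hence $d_b>1$, the same split of $\lambda^2+d_b\lambda+y$ into complex and real roots, and the same line $k=0$ for sharpness. The one real difference is that you complete the square in $ck+Dk^2/y$ to get $y/d_b\ge 8D\varepsilon/h$ directly, which bypasses the paper's chain $2\varepsilon\le w_1<hy/D$ with $d_b<1+y$ and gives the stronger margin $\min\{1/2,\,8D\varepsilon/h\}$; your weaker boundedness estimate $d_b<h/(2\varepsilon)$, compared with the paper's $h/(8\varepsilon)$, is still sufficient.
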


\begin{proof}
Set $w_1:=-ck$. Equation~\eqref{eq:Jfamily} gives
\begin{equation}
 2\varepsilon d_b
 \le w_1-\frac{w_1^2}{h}-\frac{Dw_1^2}{hy}.
 \label{eq:subineq}
\end{equation}
The right side is positive, and hence
\begin{equation*}
 0<w_1<\frac{hy}{y+D}<\min\{h,hy/D\},~~
 z=y-w_1>\frac{D-h}{D}y>0.
\end{equation*}
In particular, $d_b>1$. Bounding the right side of
\eqref{eq:subineq} first by $w_1$ and then by
$w_1-w_1^2/h\le h/4$ yields
\begin{equation}
 2\varepsilon\le w_1<h,~~
 y\ge\frac{2D\varepsilon}{h},~~
 1<d_b\le\frac{h}{8\varepsilon}.
 \label{eq:compactbounds}
\end{equation}
These bounds control $k$, $z$, and $y$, and separate the
sublevel from both stability boundaries. Any sequence in
the sublevel has a convergent subsequence whose limit remains
in $\Sset$. Continuity of $J$ then places the limit in the
same sublevel, proving compactness. Conversely, $k=0$, $z>0$
gives $J=1/2$. This line is unbounded and approaches the
stability boundary as $z\downarrow0$, so every sublevel with
$\nu\ge1/2$ is noncompact.

The characteristic polynomial is
$\lambda^2+d_b\lambda+y$. Complex roots have real part
$-d_b/2<-1/2$. If the roots are real, the larger root is
\begin{equation*}
 -\frac{2y}{d_b+\sqrt{d_b^2-4y}}
 \le-\frac{y}{d_b}
 <-\frac{y}{1+y},
\end{equation*}
where the last inequality uses $d_b=1+y-w_1<1+y$.
The lower bound on $y$ in \eqref{eq:compactbounds} now gives
\eqref{eq:marginbound}.
\end{proof}

\begin{lemma}[Stationary points and a sufficient convergence certificate]
\label{lem:safecost}
The only stationary point in $\Sset$ is the stabilizing
Riccati gain $K_\star$, and $J(K_\star)<1/2$.
Every initial gain satisfying $J(K_0)\le1/2$ generates a
global trajectory with a uniformly positive stability margin
and limit $K_\star$.
\end{lemma}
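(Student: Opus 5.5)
The plan is to prove the three assertions in order: the stationary-point classification, the strict bound $J(K_\star)<1/2$, and then convergence from the sublevel $J\le 1/2$. Convergence follows from Lemma~\ref{lem:compact} together with a LaSalle-type argument.

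\emph{Stationary points.} By \eqref{eq:grad}, a stationary point satisfies $(RK-B^\transpose P_K)X_K=0$. I split into two cases according to whether $X_K$ is singular.
\begin{itemize}
\item \emph{$X_K$ singular.} In the family coordinates, \eqref{eq:Xexact} gives $\det X_K=\eta^2k^2/(4d_b^2y)$, so $X_K$ is singular exactly when $k=0$. On the line $k=0$ we have $r=0$, and \eqref{eq:isograd} gives $J_k|_z=c/(2d_b)\ne0$ because $c\ne0$. Hence no stationary point has a singular Gramian.
\item \emph{$X_K\succ0$.} Stationarity forces $RK=B^\transpose P_K$. Substituting this into \eqref{eq:P} shows that $P_K$ solves the algebraic Riccati equation, and $F_K$ is Hurwitz. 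The stabilizing solution is unique \cite{anderson2007,lancaster1995}, and it exists because $(A,B)$ is controllable and $Q\succ0$. Therefore $K=K_\star$.
\end{itemize}
Conversely, $K_\star$ is stationary, since $RK_\star=B^\transpose P_{K_\star}$. So $K_\star$ is the only stationary point in $\Sset$.

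\emph{The value at $K_\star$.} Classical LQR theory gives $x_0^\transpose P_{K_\star}x_0\le x_0^\transpose P_Kx_0$ for every $K\in\Sset$, because $K_\star$ is optimal from every initial state. Hence $J(K_\star)=\min_{\Sset}J$. For this family $A_0=c\ne0$ and $f_0=1/2$. Proposition~\ref{prop:rates} then provides a stabilizing gain with cost strictly below $1/2$; concretely, perturb $(0,y_0)$ in $k$ with sign opposite to $c$ in \eqref{eq:Jfamily}. It follows that $J(K_\star)<1/2$.

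\emph{Convergence from $J(K_0)\le1/2$.} The argument has four steps.
\begin{enumerate}
\item \emph{Reduction to strict inequality.} If $J(K_0)=1/2$, then $K_0\ne K_\star$, so $\nabla J(K_0)\ne0$. By $J'=-\norm{\nabla J}^2$, the cost satisfies $J(K(\tau_1))<1/2$ for any small $\tau_1>0$. The segment $K([0,\tau_1])$ is a compact subset of $\Sset$, so it has a positive stability margin. It therefore suffices to treat $J(K_0)=1/2-\varepsilon$ with $\varepsilon>0$.
\item \emph{Global existence.} The sublevel $\{J\le 1/2-\varepsilon\}$ is forward invariant under the descent flow. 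By Lemma~\ref{lem:compact} it is compact and lies at positive distance from $\partial\Sset$. The standard continuation theorem then gives $T_{\max}=\infty$.
\item \emph{Uniform margin.} Inequality \eqref{eq:marginbound} gives a margin along the whole trajectory that is bounded below by a constant depending only on $\varepsilon$, $h$ and $D$.
\item \emph{Identification of the limit.} Integrating $J'=-\norm{\nabla J}^2$ shows that $\int_0^\infty\norm{\nabla J}^2\,\dd\tau<\infty$. By LaSalle's invariance principle on the compact invariant sublevel, the $\omega$-limit set is a nonempty connected set of stationary points. The first part shows this set is $\{K_\star\}$, so $K(\tau)\to K_\star$.
\end{enumerate}

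I expect the main obstacle to be the stationary-point step. The gradient formula only yields $(RK-B^\transpose P_K)X_K=0$, which does not force $RK=B^\transpose P_K$ when $X_K$ is rank-deficient. A stationary point on the line $k=0$ must therefore be excluded separately, and this is where the exact gradient \eqref{eq:isograd} and $c\ne0$ are used. The remaining steps are routine consequences of Lemma~\ref{lem:compact} and Proposition~\ref{prop:rates}.
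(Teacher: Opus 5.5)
Your proposal is correct and follows the paper's proof essentially step by step. You use the same split between $k=0$ (singular Gramian, excluded by $J_k|_z=c/(2d_b)\ne0$) and $X_K\succ0$ (reduction to the stabilizing Riccati equation), and the same perturbed gain near $(0,z_0)$ combined with Riccati optimality to obtain $J(K_\star)<1/2$. You also use the same ingredients for convergence: Lemma~\ref{lem:compact}, the bound \eqref{eq:marginbound}, and the reduction of the case $J(K_0)=1/2$ to a strict sublevel. The only difference is in identifying the limit. You apply LaSalle's invariance principle on the compact forward-invariant sublevel, whereas the paper applies Barb\u{a}lat's lemma to $\norm{\nabla_KJ}^2$. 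Both are valid here and conclude through uniqueness of the stationary point.
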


\begin{proof}
Equation~\eqref{eq:detX} gives
$\det X_K=k^2/(4d_b^2y)$. For $k\ne0$, the Gramian is
positive definite, so stationarity in \eqref{eq:grad} forces
$K=B^\transpose P_K$. Substituting this identity into
\eqref{eq:P} gives the stabilizing algebraic Riccati equation
\begin{equation*}
 A^\transpose P+PA-PBB^\transpose P+I_2=0.
\end{equation*}
Controllability and the positive weights ensure that its
stabilizing solution $P_\star$ exists and is unique
\cite{lancaster1995}; its associated gain is
$K_\star=B^\transpose P_\star$.
For $k=0$, \eqref{eq:isograd} gives
$J_k|_z=c/(2d_b)\ne0$, so there are no other stationary points.

At any point $(0,z_0)$ with $z_0>0$, a sufficiently small
displacement in the negative $k$-gradient direction stays
in $\Sset$ and decreases $J$ below $1/2$.
Completing the square, as shown in
\ref{app:optimality}, establishes that $K_\star$
minimizes $J$ for the fixed initial state $x_0=e_1$.
Consequently $J(K_\star)<1/2$.

If $J(K_0)<1/2$, descent confines the trajectory to a compact
subset of $\Sset$ by Lemma~\ref{lem:compact}, and the flow
exists there for all $\tau\ge0$. On this compact invariant
set, the gradient and Hessian are bounded. The function
$\|\nabla_KJ(K(\tau))\|^2$ therefore has bounded derivative
and is uniformly continuous. The energy identity makes it
integrable, so Barb\u{a}lat's lemma
\cite[Theorem~1]{farkas2016} gives
$\|\nabla_KJ(K(\tau))\|^2\to0$.
Every accumulation point is stationary, and uniqueness
therefore yields convergence to $K_\star$.
Equation~\eqref{eq:marginbound} gives a uniform stability margin.

If $J(K_0)=1/2$, the gradient is nonzero, so the trajectory
immediately enters a strict sublevel. The preceding finite
segment is compact inside $\Sset$ and also has a positive
stability margin.
\end{proof}

The condition $J(K)\le1/2$ provides a convergence certificate
that can be checked at any point along the flow using
\eqref{eq:Jfamily}. Below this threshold, cost descent confines
the trajectory to a compact subset of the stabilizing domain.
The argument below classifies trajectories whose initial costs
exceed the threshold.

The Gramian is singular on the stabilizing line $k=0$ and
positive definite elsewhere. Thus positivity throughout
$\Sset$, as required in
\cite[Assumption~2]{watanabe2026unified}, fails for this family.
Nevertheless, the Gramian remains positive definite along
every colliding trajectory, with its smallest eigenvalue
tending to zero at the rate given in
Proposition~\ref{prop:rates}.

\subsection{A radial estimate rules out gain escape}

Define $V:=k^2+z^2$. The function $ck+Dk^2/y$ is homogeneous
of degree one in $(k,z)$, while $k^2$ has degree two.
Euler's identity applied to \eqref{eq:Jfamily} yields
\begin{equation}
 V'=-\frac{F+d_bk^2}{d_b^2}.
 \label{eq:radialV}
\end{equation}
Whenever $J\ge1/2$, we have $F\ge0$ and therefore $V'\le0$.
Until a trajectory enters $J<1/2$, it remains in its initial
$(k,z)$-ball. If it enters this strict sublevel, its subsequent
trajectory is confined to a compact subset of $\Sset$ by
Lemma~\ref{lem:compact}. Every maximal trajectory consequently
has bounded gains, even when its initial cost sublevel is
unbounded.

\subsection{Boundary accumulation and global continuation}

\begin{lemma}
\label{lem:boundaryseq}
Every bounded sequence in $\Sset$ with bounded costs that
converges to the stability boundary has limit $(k,z)=(0,0)$.
There exist constants $\epsilon_0>0$ and $m_0>0$ such that
\begin{equation*}
 \|\nabla_KJ(K)\|\ge m_0
 ~~\text{whenever }K\in\Sset
 \text{ and }k^2+z^2<\epsilon_0^2.
\end{equation*}
\end{lemma}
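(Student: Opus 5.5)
The plan has two parts: the structural claim about boundary limits, and then a uniform gradient bound near the origin of the $(k,z)$ coordinates. Both use the exact formula \eqref{eq:Jfamily}.

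\emph{Boundary limits.} Take a bounded sequence in $\Sset$ with bounded $J$ that converges to $\partial\Sset$. The boundary consists of two pieces, $y=0$ and $d_b=0$.

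First, suppose $d_b\to0$ along the sequence. Boundedness of $J$ forces $F=O(d_b)\to0$. Since $D>h$, completing the square gives
\[
F\ge k^2+\frac{Dk^2}{y}+ck\ge k^2-\frac{h y}{4D}.
\]
On the other hand, $y=d_b-1-ck\to-1-ck$, and $y>0$ forces $ck<-1$, so $|k|$ stays bounded away from zero. I will then show $F$ cannot tend to zero. Write $w_1=-ck>1$ and $y=d_b-1+w_1$, so $y\to w_1-1$. This gives
\[
F=-w_1+\frac{w_1^2}{h}+\frac{Dw_1^2}{hy}.
\]
With $y<w_1$ we get $Dw_1^2/(hy)>Dw_1/h>w_1$, hence $F>w_1^2/h>0$ stays bounded below. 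This contradicts $F\to0$, so the piece $d_b=0$ is excluded.

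Next, on the piece $y\to0$ with $d_b$ bounded below, boundedness of $F$ gives $k^2/y$ bounded, so $k\to0$. Then $z=y+ck\to0$, and the limit is $(0,0)$.

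\emph{Gradient bound.} I use \eqref{eq:isograd} on a small ball $k^2+z^2<\epsilon_0^2$. On this ball $d_b=1+z\in(1/2,3/2)$, but $r=k/y$ is unconstrained and $y$ may be large relative to $k$. The argument splits by the size of $r$.

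If $|r|\le R_1$, then $J_k|_z=\bigl(c+2Dr+Dcr^2+O(\epsilon_0)\bigr)/(2d_b)$ and $J_z|_k=-Dr^2/(2d_b)+O(\epsilon_0)$. Indeed $F=ck+k^2+Dkr$ is $O(\epsilon_0)$ for bounded $r$. The two leading expressions cannot vanish simultaneously: $r=0$ gives $J_k\approx c/(2d_b)\ne0$. By compactness in $r$, $\max\{|c+2Dr+Dcr^2|,\,Dr^2\}\ge\mu>0$ on $[-R_1,R_1]$. Choosing $\epsilon_0$ small then yields a lower bound.

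If $|r|\ge R_1$, then $|J_z|_k|\ge Dr^2/(2d_b)-|F|/(2d_b^2)$. The only dangerous part of $F$ is $Dk^2/y=Dkr$, and $|Dkr|\le D\epsilon_0|r|\ll Dr^2$ once $R_1\ge1$. So $|J_z|\ge DR_1^2/8$.

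In both cases the gradient in the isometric coordinates is bounded below. Since those coordinates are isometric to the original gains, this is the claimed $m_0$.

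The main obstacle I expect is the $d_b\to0$ exclusion. The inequality $F>w_1^2/h$ needs care: $d_b\to0$ gives $y\approx w_1-1<w_1$, and I have to make sure no sign error enters through $c$. Failing that, I would fall back on Lemma~\ref{lem:compact}'s inequality \eqref{eq:subineq}, adapted to costs above $1/2$.
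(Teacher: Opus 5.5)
Your proof is correct and follows the paper's overall plan. You split the boundary into the pieces $d_b\to0$ and $y\to0$. You then use that $J_z|_k$ controls $r=k/y$, while $J_k|_z\approx c/2$ near $r=0$. There are two local differences.

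\emph{Excluding $d_b\to0$.} The paper uses the Gramian bound $J\ge\tr X_K=(z^2+y+k^2)/(2d_by)$. Its numerator stays away from zero while the denominator vanishes, and boundedness of the sequence is used there to keep $y$ bounded. You instead play $F=2d_b(J-\tfrac12)\to0$ against $F>w_1^2/h$. This comes straight from \eqref{eq:Jfamily} and needs no bound on $y$. Your worry about signs is unfounded, since $k^2=w_1^2/h$ whatever the sign of $c$. Note only that $y>0$ gives $w_1>1-d_b$ rather than $w_1>1$, which still bounds $w_1$ away from zero.

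\emph{The gradient bound.} The paper argues by contradiction with the identity $-2d_b^2J_z|_k=D(d_b+y)r^2+ck+k^2$. Its $r^2$ term is nonnegative, so a vanishing gradient forces $r_n\to0$ at once, and then $J_k|_z\to c/2\ne0$, with no case split. Your direct split at $|r|=R_1$ reaches the same conclusion and gives explicit constants. You may take $R_1=1$ and choose $\epsilon_0$ afterwards. The cost is slightly more bookkeeping.
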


\begin{proof}
A boundary limit satisfies $y=0$ or $d_b=0$.
If $d_b\to0$, then $z\to-1$, and $y=z-ck\ge0$ in the
limit forces a nonzero limit of $k$. Since $Q=I_2$,
\eqref{eq:Xexact} gives
\begin{equation*}
 J\ge\tr X_K=\frac{z^2+y+k^2}{2d_by}.
\end{equation*}
The numerator is bounded away from zero, while the denominator
tends to zero because the sequence is bounded. This contradicts
boundedness of the costs. If $y\to0$ and $d_b$ is bounded below,
the term $Dk^2/y$ in \eqref{eq:Jfamily} forces $k\to0$.
Then $z=y+ck\to0$.

Suppose now that stable points $(k_n,z_n)\to(0,0)$ have
gradients tending to zero. Write
\begin{equation*}
 y_n:=z_n-ck_n,~~ d_n:=1+z_n,~~
 r_n:=k_n/y_n,~~ F_n:=F(k_n,z_n).
\end{equation*}
Since $F_n=ck_n+k_n^2+Dr_n^2y_n$, the second identity in
\eqref{eq:isograd} gives
\begin{equation*}
 -2d_n^2J_z\big|_k(k_n,z_n)
 =D(d_n+y_n)r_n^2+ck_n+k_n^2.
\end{equation*}
The left side tends to zero, while $d_n+y_n\to1$ and
$ck_n+k_n^2\to0$. Hence $r_n\to0$.
The first identity in \eqref{eq:isograd} then gives
$J_k|_z(k_n,z_n)\to c/2\ne0$, a contradiction.
The coordinate transformation is an isometry, so this proves
the stated lower bound for the original Euclidean gradient.
\end{proof}

To complete Theorem~\ref{thm:global}, consider a global
trajectory that never enters $J<1/2$. Its gains are bounded
by \eqref{eq:radialV}, and its cost lies in
$[1/2,J(K_0)]$. The gradient norm has a uniform positive
lower bound along this trajectory. Otherwise, a sequence of
points with gradients tending to zero would have a convergent
subsequence. An interior limit would be a stationary point
with cost at least $1/2$, contradicting
Lemma~\ref{lem:safecost}. A boundary limit would contradict
Lemma~\ref{lem:boundaryseq}. Hence $J'\le-m^2$ for some
constant $m>0$ at all times, which is impossible for a
nonnegative cost. Every global trajectory must therefore
enter $J<1/2$ at a finite time and converge to $K_\star$.

If the maximal time is finite, Lemma~\ref{lem:endpoint}
identifies the endpoint in the original gain coordinates
as $K=(0,b)$, and Proposition~\ref{prop:rates} gives the
cost limit $1/2$. These are the only two possibilities
for a maximal trajectory. Combining this dichotomy with
\eqref{eq:discriminant} and Theorem~\ref{thm:general}
proves \eqref{eq:globalsafe}--\eqref{eq:globalbad}.
In particular, no global trajectory approaches the
stability boundary as $\tau\to\infty$.

\subsection{An explicit parameter slice and critical example}

For $b^2=14$, the polynomial factors as
\begin{equation}
 \Efun(h,14)=-27(h-25)(h-160).
 \label{eq:slice}
\end{equation}
Thus every stabilizing initialization converges when
$0<c^2<25$ or $c^2>160$, whereas an open collision basin
exists when $25\le c^2\le160$.
Collision is therefore not monotone in the coupling magnitude:
as $c^2$ increases along this slice, a collision basin appears
at $25$ and disappears beyond $160$.

At the lower endpoint, take $c=5$ and $b=\sqrt{14}$.
The direction polynomial factors as
\begin{equation}
 p_h(s)=\frac{(4s+5)^2(13s+5)}8.
 \label{eq:criticalfactor}
\end{equation}
The double inward root $s_*=-5/4$ has limiting radial speed
\begin{equation*}
 G_*=\frac{Ds_*^2}{2h(s_*+1)}=-5.
\end{equation*}
Every colliding trajectory at these parameter values
approaches this direction. Since $r_*=s_*/c=-1/4$,
Proposition~\ref{prop:rates} gives
\begin{equation*}
 y\sim5(T-\tau),~~
 \lambda_{\min}(X_K)\sim y/32,
 ~~ \tau\uparrow T.
\end{equation*}

Finally, an explicit stable gain makes the terminal
suboptimality quantitative. Set
\begin{equation*}
 y_c:=1,~~
 k_c:=-\frac{c}{2(1+D)},~~
 z_c:=1-\frac{h}{2(1+D)},
\end{equation*}
and define $K_c:=(-k_c,b+z_c)$.
This gain is stabilizing because $y_c=1$ and
$1+z_c=2-h/[2(1+D)]>0$. Its cost is
\begin{equation*}
 J_c:=J(K_c)
 =\frac12-\frac{h}{16(1+D)-4h}<\frac12.
\end{equation*}
Consequently, the terminal cost of every collision exceeds
the optimal cost by at least
$h/[16(1+D)-4h]$.

\section{Numerical illustrations and verification}
\label{sec:numerics}
The experiments examine the geometry of collision, the asymptotic loss of
stability and excitation, and the effect of adding a second initial direction.
A common grid of initial gains tests how the observed outcomes change across
the exact parameter region. All examples use the family \eqref{eq:family},
with $c=\sqrt h$ and $b=\sqrt v$.

We integrate the exact flow in $(r,\log y,\tau)$ coordinates using DOP853
\cite[Section~II.6]{hairer1993}. The logarithmic coordinate preserves $y>0$,
and $\tau$ records the original optimization time. Costs and Gramians are
evaluated from \eqref{eq:Jfamily} and \eqref{eq:Xexact}. A numerical boundary
approach is recorded at a positive radial cutoff; the finite-time collision
criterion itself is supplied by Theorem~\ref{thm:general}.
\ref{app:reproduction} gives the initial conditions, tolerances,
stopping rules, and independent algebraic checks.

\subsection{Parameter region and gain trajectories}
Figure~\ref{fig:parameters}(a) displays the analytic region
\eqref{eq:interval}. Its boundary is included, and its lowest point is
$(h,v)=(54,25/2)$. Along $v=14$, the region extends from $h=25$ to $h=160$.
Outside this interval all stabilizing initializations converge; within it,
Theorem~\ref{thm:global} permits convergence and collision for different gains.

Panels (b,c) show eight trajectories for the same system,
$(h,v)=(64,14)$, in the isometric gain coordinates $K=(-k,b+z)$.
Four enter $J<1/2$ and approach the Riccati gain, while four approach the
boundary cutoff. The global view shows the destination of the convergent
trajectories; the enlargement resolves their separation from the approaching
trajectories near $(k,z)=(0,0)$. The pale blue region is the certified cost
sublevel $J<1/2$, rather than a numerical estimate of the full convergence basin.
The white stable region contains trajectories with both outcomes.

An independent Riccati calculation gives
\begin{equation*}
 K_\star\approx(0.50955608,\,8.55340315),~~
 J(K_\star)\approx0.3701762999.
\end{equation*}
Thus the limiting collision cost $1/2$ is separated from the optimum by
approximately $0.129824$. This example illustrates how cost descent can lead
either into a compact sublevel or toward the stability boundary.

\begin{figure}[!t]
\centering
\includegraphics[width=\linewidth]{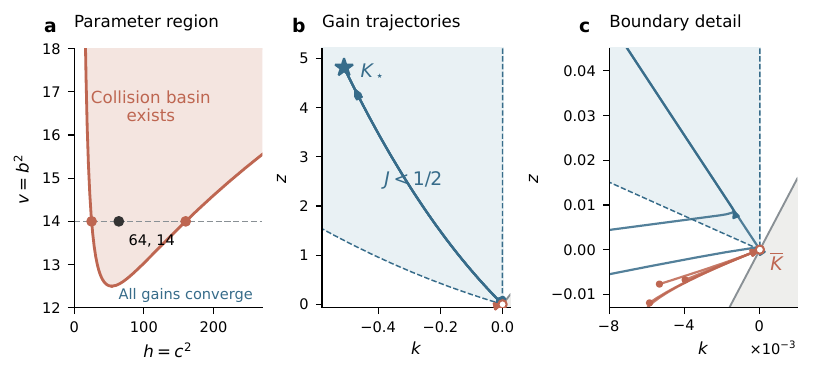}
\caption{Parameter region and gain trajectories.
(a) The shaded analytic region, including its boundary, admits a collision
basin. The dashed line marks $v=14$; the black point marks $(64,14)$.
(b,c) Global and enlarged views of eight trajectories at $(64,14)$.
Blue curves enter $J<1/2$; red curves approach $y=10^{-14}$.
Pale blue denotes $J<1/2$, white the remaining stable region, and gray
instability. Dashed blue curves mark $J=1/2$. Dots mark initial gains,
the star $K_\star$, and the open circle $\bK$. Axes have different scales.}
\label{fig:parameters}
\label{fig:outcomes}
\end{figure}

\subsection{Hyperbolic and critical collision rates}
We compare a simple inward root with a critical double root.
The first example uses $(h,v)=(27,26)$, $\theta_0=3/2$, and $y_0=0.02$.
Its limiting direction is $s_*=-3/2$, with $G_*=-9/2$ and
$r_*^2/2=1/24$. The critical example uses $(h,v)=(25,14)$,
$\theta_0=1.252$, and $y_0=10^{-7}$. Its limiting direction is
$s_*=-5/4$, with $G_*=-5$ and $r_*^2/2=1/32$.

Figure~\ref{fig:rates} plots three normalized quantities predicted by
Proposition~\ref{prop:rates}:
\begin{equation*}
 \frac{-y'}{-G_*},~~
 \frac{-\max_{\lambda\in\spec(F_K)}\operatorname{Re}\lambda}{y},~~
 \frac{\lambda_{\min}(X_K)}{r_*^2y/2}.
\end{equation*}
Each has theoretical limit one. The horizontal coordinate
$\log_{10}(y_0/y)$ measures the number of decades by which the radial
coordinate has decreased. These ratios can be evaluated without estimating
the collision time.

The simple-root trajectory rapidly approaches all three limiting ratios.
The critical trajectory retains a directional transient, visible in the
radial velocity and Gramian ratio even after a much larger decrease in $y$.
Its stability ratio is already close to one. These different transients
are consistent with the quadratic direction equation at the critical root:
a small radial coordinate does not by itself imply that the direction has
reached its limit.

\begin{figure}[!htbp]
\centering
\includegraphics[width=\linewidth]{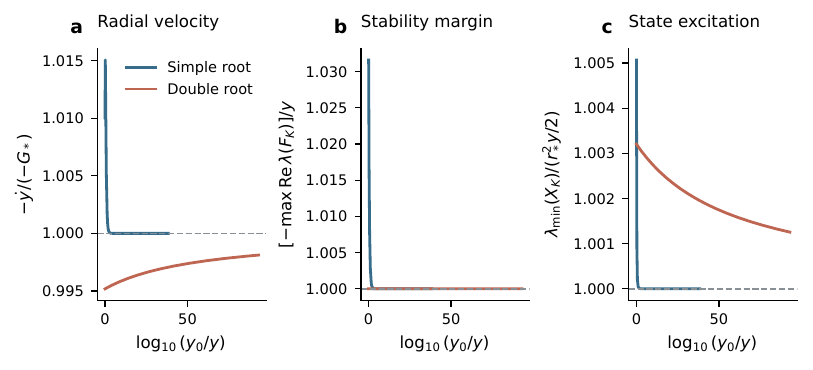}
\caption{Normalized collision rates for a simple inward root and a critical
double root. (a) Radial velocity, where the dot denotes differentiation
with respect to optimization time $\tau$.
(b) Stability margin. (c) Smallest accumulated Gramian eigenvalue.
Dashed lines mark the theoretical limit one. The simple-root and
critical integrations stop at $y=10^{-40}$ and $y=10^{-100}$,
respectively, using the logarithmic radial coordinate.
The critical directional transient remains visible over the displayed range.}
\label{fig:rates}
\end{figure}

\subsection{Initial gains across the parameter boundary}
To examine dependence on initialization, we fix $v=14$ and use
$h\in\{16,25,64,160,225\}$. At each value, the grid contains 21 equally
spaced directions $\theta_0\in[0.8,1.6]$ and four radial coordinates
$y_0\in\{10^{-7},10^{-5},10^{-3},10^{-1}\}$, giving 420 stable initial gains.
Reaching $J\le1/2$ certifies convergence by Lemma~\ref{lem:safecost}.
Boundary approach denotes reaching the radial cutoff with negative radial
velocity and a terminal direction near an attracting inward root.
The precise classification rule is given in \ref{app:gridprotocol}.

Figure~\ref{fig:initialization_grid} shows the outcomes on this common grid.
All 84 gains reach the safe threshold at each exterior parameter value,
$h=16$ and $h=225$. At $h=64$ and $h=160$, 36 gains reach that threshold
and 48 approach the boundary cutoff. At $h=25$, the corresponding counts
are 45 and 36, with three cases unresolved under the first stopping rule.
These observations agree with the parameter classification and show the
initialization dependence within the collision region.

\begin{figure}[!htbp]
\centering
\includegraphics[width=\linewidth]{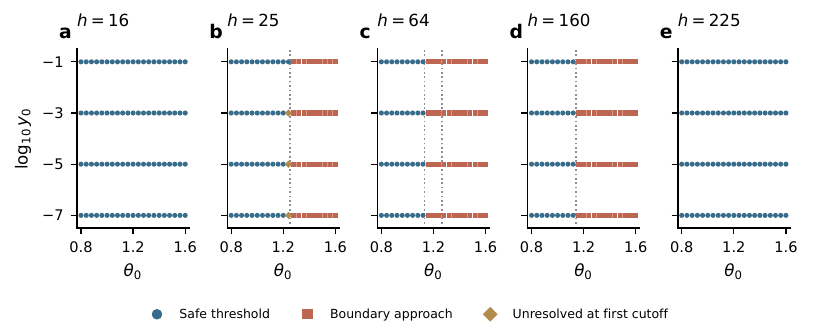}
\caption{Outcomes of the 420 initial gains at $v=14$.
Blue circles denote entry into the safe cost sublevel, red squares denote
boundary approach under the stated cutoff rule, and gold diamonds denote
cases unresolved at the first cutoff. Dotted vertical lines mark inward
roots of the limiting direction equation. The three gold cases in panel (b)
reach the safe threshold when continued with a smaller cutoff.}
\label{fig:initialization_grid}
\end{figure}

All three unresolved cases have $h=25$ and $\theta_0=1.24$, just below
the double direction $\theta_c=1.25$. Continuing them with cutoff $10^{-60}$
and tighter tolerances yields the safe threshold in every case.
For $y_0=10^{-7},10^{-5},10^{-3}$, threshold entry occurs at radial
coordinates approximately $2.304\times10^{-21}$,
$2.309\times10^{-19}$, and $2.924\times10^{-17}$, respectively.
A very small stability margin can therefore precede recovery rather than
collision. The direction dynamics distinguish these transient approaches
from attraction to the critical inward root.

\subsection{Restoring excitation of the marginal mode}
At the collision endpoint, the initial state $e_1$ does not excite the
marginal mode. To examine the effect of additional excitation, set
$\Sigma_\varepsilon:=\diag(1,\varepsilon)$ and consider
\begin{equation*}
 J_\varepsilon(K):=\tr(P_K\Sigma_\varepsilon)
 =J_0(K)+\varepsilon(P_K)_{22},
\end{equation*}
where $J_0=J$ is the original objective. The gain follows the Euclidean
gradient flow of $J_\varepsilon$. For this family,
\begin{equation}
 (P_K)_{22}
 =\frac{D/y+(b+z)^2+y+2b+1}{2(1+z)},
 ~~ D=h+1+v.
 \label{eq:p22}
\end{equation}
\ref{app:secondinitial} derives this identity. For bounded
approach directions $k=ry$, it implies
\begin{equation}
 J_\varepsilon
 =\frac12+\frac{\varepsilon D}{2y}+O(y)+O(\varepsilon),
 ~~ y\downarrow0.
 \label{eq:excitationbarrier}
\end{equation}
Every fixed $\varepsilon>0$ therefore introduces a divergent cost
contribution along the former collision directions. Positive definite
initial covariance gives global convergence to the Riccati gain
\cite[Corollary~3.3.1, Lemma~4.1, and Theorem~4.2]{bu2020}.
The experiment examines the approach and subsequent departure as
this additional weight becomes small.

We fix $(h,v)=(64,14)$ and use a common initial gain. The five
weights are $\varepsilon=0$, $10^{-10}$, $10^{-8}$, $10^{-6}$, and $10^{-4}$.
Its coordinates are $y_0=0.025$ and $r_0=s_1/8$, where $s_1$
is the smallest root of \eqref{eq:ph}; equivalently,
$\theta_0\approx1.265801919$.
Figure~\ref{fig:excitation} shows the local paths, the radial coordinate
in optimization time, and the minimum radial coordinate.
The three smallest positive weights give an initial approach followed
by departure. Their minimum values of $y$ are approximately
$1.9524\times10^{-4}$, $1.9522\times10^{-3}$, and
$1.9467\times10^{-2}$. For $\varepsilon=10^{-4}$, the minimum is
attained at initialization. The zero-weight run reaches $y=10^{-14}$.

\begin{figure}[!htbp]
\centering
\includegraphics[width=\linewidth]{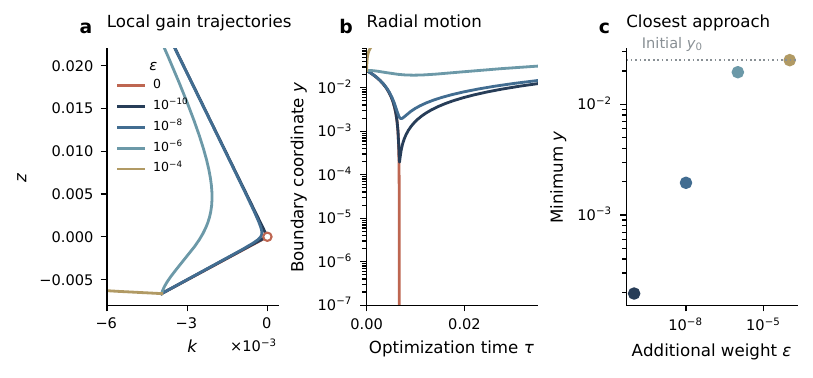}
\caption{Effect of additional initial excitation from a common gain.
(a) Local gain trajectories; the open circle marks the original collision
endpoint. (b) Radial coordinate versus optimization time; the plot shows
the initial approach and departure and clips values below $10^{-7}$.
The zero-weight integration continues to $y=10^{-14}$.
(c) Minimum radial coordinate for positive weights, using the recorded
values in Table~\ref{tab:excitation}; the dotted line marks the common
initial value $y_0=0.025$. The local gain axes have different scales.}
\label{fig:excitation}
\end{figure}

Each positive-weight run reaches $\tau=5000$ with its own objective
gap below $1.2\times10^{-9}$. The shared cost $J_0$ is approximately
$0.370176301$, while the gain errors remain between
$1.17\times10^{-3}$ and $1.31\times10^{-3}$.
The shared cost allows comparison from the same physical initial state;
convergence of the perturbed flows follows from positive excitation.
Table~\ref{tab:excitation} reports the full numerical values.

\section{Conclusions}
For LQR costs evaluated at a fixed initial state, the Euclidean gradient flow can reach the stability boundary in finite optimization time while the cost decreases and the accumulated state Gramian remains positive definite. This work gives a necessary and sufficient condition for such collisions in controllable planar systems with one input, including the critical case. For a triangular family, an explicit parameter region identifies when collisions occur, and every trajectory either converges to the Riccati gain or collides with the boundary. The analysis links collision to vanishing excitation of a marginal mode and gives a cost threshold that certifies convergence. Extending these results to higher dimensions and discrete policy updates remains an open direction.

\appendix
\section{Lyapunov identities and exact cost reduction}
\label{app:algebra}
\subsection{The policy gradient}
For a gain variation $\Delta K$, differentiation of \eqref{eq:P} gives
\begin{equation*}
 F_K^\transpose\Delta P+\Delta P F_K
 +\Delta K^\transpose(RK-B^\transpose P_K)
 +(RK-B^\transpose P_K)^\transpose\Delta K=0.
\end{equation*}
Multiplying by $X_K$, taking traces, and using \eqref{eq:X} gives
\begin{equation*}
 \Delta J=\tr(x_0x_0^\transpose\Delta P)
 =2\tr\bigl[X_K(RK-B^\transpose P_K)^\transpose\Delta K\bigr].
\end{equation*}
This is precisely \eqref{eq:grad} under the Euclidean matrix inner product.

\subsection{The accumulated state Gramian}
Let
\begin{equation*}
 N_X=\begin{pmatrix}z^2+ay&kz\\kz&k^2\end{pmatrix},~~
 F_K=\begin{pmatrix}-a+\ell k&a\gamma-\ell z\\k&-z\end{pmatrix},~
 y=z-\gamma k.
\end{equation*}
The $(2,2)$ entry of $F_KN_X+N_XF_K^\transpose$ is zero, and its off-diagonal entry is
\begin{equation*}
 (-a+\ell k)kz+(a\gamma-\ell z)k^2+k(z^2+ay)-kz^2
 =ak(-z+\gamma k+y)=0.
\end{equation*}
Half its $(1,1)$ entry is
\begin{align*}
 (-a+\ell k)(z^2+ay)+(a\gamma-\ell z)kz
 &=-az^2-a^2y+a\ell ky+a\gamma kz\\
 &=-ay(a+z-\ell k)=-ayD_b.
\end{align*}
Multiplication by $\eta/(2D_bay)$ proves \eqref{eq:Xexact}.

\subsection{Cancellation in the cost numerator}
Write $W_0=Q+R(p,q)^\transpose(p,q)$ with entries $w,d_0,v_0$. Since
$K=(p-k/e,q+z/e)$, the feedback perturbation is $(-k,z)/e$. Its contribution to $\tr[(Q+RK^\transpose K)N_X]$ separates into a baseline, a cross term and a quadratic term:
\begin{align*}
 \tr(W_0N_X)&=w(z^2+ay)+2d_0kz+v_0k^2,\\
 2R(-k,z)N_X(p,q)^\transpose/e&=-2aRpky/e,\\
 R(-k,z)N_X(-k,z)^\transpose/e^2&=aRk^2y/e^2.
\end{align*}
Substitute $z=y+\gamma k$ and subtract $wyD_b$, the numerator corresponding to the constant $f_0$. The result is
\begin{align*}
 &w(z^2+ay)+2d_0kz+v_0k^2-2aRpky/e+aRk^2y/e^2-wyD_b\\
 &\hspace{1cm}=A_0ky+Sk^2+(aR/e^2)k^2y.
\end{align*}
Multiplying by $\eta/(2D_bay)$ yields \eqref{eq:Jexact}, with every coefficient explicit.

\subsection{Exact derivatives used by the numerical integrator}
Let $d_1=\gamma-\ell$, $q_1=aR/e^2$ and $N=A_0k+Sk^2/y+q_1k^2$. Then
\begin{equation*}
 J_k=\frac{\eta[(A_0+2Sk/y+2q_1k)D_b-d_1N]}{2aD_b^2},~~
 J_y=\frac{\eta[-Sk^2D_b/y^2-N]}{2aD_b^2}.
\end{equation*}
At $k=ry$, these expressions extend smoothly to $y=0$ for finite $r$, since
\begin{equation*}
 N=y(A_0r+Sr^2)+q_1r^2y^2.
\end{equation*}
If $V_k=-M_{11}J_k-M_{12}J_y$ and $V_y=-M_{21}J_k-M_{22}J_y$, the exact equations integrated in desingularized time are
\begin{equation}
 r_\sigma=V_k-rV_y,~~ y_\sigma=yV_y,~~ \tau_\sigma=y.
 \label{eq:exactdesing}
\end{equation}
There is no division by $y$ in their smoothly extended right-hand side. For the plotted integrations we evolve $L_y=\log y$, so $(L_y)_\sigma=V_y$, preserving $y>0$ without an artificial floor.

\subsection{Excitation from a second initial direction}
\label{app:secondinitial}
For the family \eqref{eq:family}, let $X_K^{(2)}$ solve the state Lyapunov equation with forcing $e_2e_2^\transpose$. Direct substitution gives
\begin{equation}
 X_K^{(2)}=\frac1{2d_by}
 \begin{pmatrix}c^2&c\\c&1+y\end{pmatrix},~~
 F_KX_K^{(2)}+X_K^{(2)}F_K^\transpose+e_2e_2^\transpose=0.
 \label{eq:Xsecond}
\end{equation}
With $K=(-k,b+z)$ and $z=y+ck$, its trace product with $I_2+K^\transpose K$ is
\begin{align*}
 (P_K)_{22}
 &=\frac{c^2+1+y+c^2k^2-2ck(b+z)+(1+y)(b+z)^2}{2d_by}\\
 &=\frac{c^2+1+y+(b+y)^2+y(b+z)^2}{2d_by}.
\end{align*}
Expanding $(b+y)^2$ yields \eqref{eq:p22}. For covariance $\Sigma_\varepsilon$, the accumulated Gramian is $X_K+\varepsilon X_K^{(2)}$. Hence the gradient is obtained either by differentiating $J+\varepsilon(P_K)_{22}$ or by replacing $X_K$ with this sum in \eqref{eq:grad}. Both calculations use the same metric \eqref{eq:metric}.

\subsection{Riccati optimality for a fixed initial state}
\label{app:optimality}
Under the standing assumptions, the continuous-time algebraic Riccati equation has a unique stabilizing solution $P_\star\succ0$ \cite{anderson2007,lancaster1995}:
\begin{equation*}
 A^\transpose P_\star+P_\star A-P_\star BR^{-1}B^\transpose P_\star+Q=0,
 ~~ K_\star=R^{-1}B^\transpose P_\star.
\end{equation*}
For any stabilizing $K$, completing the square along its physical state trajectory gives
\begin{equation*}
 \frac{\dd}{\dd t}(x^\transpose P_\star x)
 =-x^\transpose(Q+RK^\transpose K)x
   +R\bigl((K-K_\star)x\bigr)^2.
\end{equation*}
Since $x(t)\to0$ exponentially, integration yields
\begin{equation*}
 J(K)=x_0^\transpose P_\star x_0+
 R\int_0^\infty\bigl((K-K_\star)e^{F_Kt}x_0\bigr)^2\,\dd t
 \ge J(K_\star).
\end{equation*}
Thus the minimum over stabilizing gains is attained. For one initial state, equality holds exactly when $(K-K_\star)e^{F_Kt}x_0=0$ for all $t\ge0$, which can permit several minimizing gains. In the triangular family, the stationary point argument in Lemma~\ref{lem:safecost} gives uniqueness.

\section{Polynomial discriminants}
\label{app:disc}
For a cubic $as^3+bs^2+cs+d$ with roots $s_1,s_2,s_3$, the discriminant is $a^4\prod_{i<j}(s_i-s_j)^2$; see \cite[Section~4.1]{basu2006}. Expansion in the coefficients gives
\begin{equation*}
 \Delta=b^2c^2-4ac^3-4b^3d-27a^2d^2+18abcd.
\end{equation*}
Inserting the coefficients of \eqref{eq:ph}, with $\beta=h/D$, and grouping by powers of $\beta$ gives
\begin{equation*}
 \frac{\Delta}{h^2}
 =4h(h-8)+12h(5-h)\beta
 +3(4h^2-8h-9)\beta^2-4h(1+h)\beta^3.
\end{equation*}
After substituting $D=h+1+v$, the expression becomes
\begin{equation*}
 \Delta=\frac{h^3}{D^3}
 \left[4(h-8)v^3-24(h+4)v^2-3(29h+32)v-(h+1)(27h+32)\right].
\end{equation*}
Collecting powers of $h$ inside the brackets gives \eqref{eq:E}. Its quadratic discriminant is
\begin{align*}
 &(v+1)^2(4v^2-28v-59)^2-3456(v+1)^3\\
 &\hspace{2cm}=(v+1)^2(2v-25)(2v-1)^3,
\end{align*}
which yields \eqref{eq:hpm}. As a second useful parameterization, if $h\le8$, the cubic expression in $v$ above is strictly negative for every $v\ge0$. If $h>8$, its descending coefficient signs are $+,-,-,-$, so it has exactly one positive root by Descartes' rule \cite[Theorem~2.33]{basu2006} and its signs at zero and infinity. Thus, at a fixed $h>8$, the collision region is a closed upper interval in $v$; at a fixed $v>25/2$, it is the bounded interval in $h$ given by \eqref{eq:hpm}.

\section{Reproduction of the numerical calculations}
\label{app:reproduction}
The scripts use Python with NumPy, SciPy, SymPy, and Matplotlib. The recorded run used NumPy 2.3.5, SciPy 1.17.0, and Matplotlib 3.10.8. The scripts \texttt{make\_figures.py} and \texttt{extended\_experiments.py} evaluate the exact flow in \ref{app:algebra}; \texttt{verify\_symbolic.py} and \texttt{verify\_algebra.py} perform the symbolic and independent Lyapunov checks. Numerical outputs are stored as JSON and CSV files alongside the vector figures.

\subsection{Exact parameter grids}
The discriminant calculation uses the Cartesian product
\begin{align*}
 h&\in\{1,4,9,16,25,27,36,54,64,100,160,225\},\\
 v&\in\{0,1/2,1,4,9,25/2,14,16,20,26,30,40,60\}.
\end{align*}
At each of these 156 pairs, both expressions for $\Efun$ and the cubic discriminant are evaluated with exact rational arithmetic. The flux identity is checked at
$s\in\{-5/2,-2,-3/2,-5/4,-1/2,1/3\}$, yielding 936 evaluations. The critical pairs are $(25,14)$, $(160,14)$, and $(54,25/2)$. Their repeated directions are obtained from the polynomial greatest common divisor of $p_h$ and $p_h'$. The scalar equation \eqref{eq:thetac} is solved on $(1,3/2)$ for the 153 noncritical comparisons.

\subsection{Construction of the general systems}
Index the systems by $i=0,\ldots,39$, and let $m_j=i\bmod j$. The parameters are
\begin{align*}
 a&=0.6+0.13m_7,& e&=(-1)^i(0.7+0.11m_5),\\
 p&=-0.45+0.12m_8,& q&=-0.7+0.19m_9,\\
 \eta&=0.8+0.17m_6,& R&=0.5+0.18m_4.
\end{align*}
Using indices that start at zero, choose
\begin{align*}
 \gamma&=(0.6,-1.2,2.5,-3,0)_{m_5},\\
 \ell&=(-0.3,0.2,-0.7,0.4)_{\lfloor i/5\rfloor\bmod4}.
\end{align*}
If $|\gamma-\ell|<0.1$, increase $\ell$ by $0.35$. Define
\begin{gather*}
 H_i=\begin{pmatrix}1+0.05m_3&0.13m_4-0.2\\0.07m_5-0.1&0.9+0.08m_6\end{pmatrix},\\
 Q=H_i^\transpose H_i+0.4I_2,
\end{gather*}
and set
\begin{equation*}
 B=e(\ell,1)^\transpose,~~
 A=\begin{pmatrix}-a&a\gamma\\0&0\end{pmatrix}+B(p,q),
 ~~ x_0=\sqrt\eta\,e_1.
\end{equation*}
Controllability follows from $\det[B,AB]=ae^2(\ell-\gamma)\ne0$. For each system use the three pairs
\begin{equation*}
 (r,y)\in\left\{
 \left(\frac{-0.4}{1+|\gamma-\ell|},0.08\right),
 \left(\frac{0.1}{1+|\gamma-\ell|},0.4\right),
 \left(\frac{0.6}{1+|\gamma-\ell|},1.3\right)\right\}.
\end{equation*}
Recover $k=ry$, $z=y+\gamma k$, and $K=(p-k/e,q+z/e)$. These gains satisfy $y>0$ and $D_b=a+y[1+(\gamma-\ell)r]>0$.

Finally, rotate the state coordinates by the orthogonal matrix with angle $0.137(i+1)$ radians. Apply the rotation consistently to $A$, $B$, $Q$, $x_0$, and $K$. Solve both Lyapunov equations in these rotated coordinates and compare the resulting cost, Gramian, and Euclidean gradient with the transformed rational expressions. The complete parameter records are included in \texttt{algebra\_results.json}.

\subsection{Initialization and excitation experiments}
The initial gains are specified in Section~\ref{sec:numerics}; stopping events and tolerances are given below. The script \texttt{extended\_experiments.py} retains each initial gain, stopping reason, terminal direction, radial cutoff, and tolerance in CSV files. Separate records contain the continuation past the first cutoff for the three critical trajectories and the refined run at $\varepsilon=10^{-10}$. The excitation comparisons also retain full sampled trajectories for all five covariance choices.

The additional cost and gradient formulae are checked independently against solutions of both Lyapunov equations on nine gains and five covariance values. The maximum absolute cost discrepancy is $1.09\times10^{-14}$, and the maximum gradient discrepancy divided by one plus the reference gradient norm is $2.23\times10^{-14}$. All 420 initial gains were processed successfully; 32 already satisfied the safe threshold, and all 388 remaining integrations completed successfully. The sampled costs in the grid and the corresponding objectives in the excitation runs were nonincreasing. These outputs, together with the minimum radial coordinates and final errors, are recorded in \texttt{extended\_results.json}.

\subsection{Protocols for the integrated figures}
\label{app:figureprotocol}
Figure~\ref{fig:parameters}(b,c) uses eight initial pairs
$(\theta_0,y_0)$ at $(h,v)=(64,14)$:
\begin{equation*}
\begin{gathered}
(0,0.1),\ (0.5,0.15),\ (1,0.12),\ (1.1,0.08),\\
(1.22,0.035),\ (1.27,0.025),\ (1.34,0.035),\ (1.43,0.035).
\end{gathered}
\end{equation*}
The first four enter the strict safe sublevel; the remaining four approach
the positive radial cutoff. DOP853 uses relative tolerance $2\times10^{-10}$,
absolute tolerance $2\times10^{-12}$, and maximum step $0.15$ in $\sigma$.
These integrations stop at $y=10^{-14}$, $\tau=5000$, or $\sigma=1500$.
All eight terminate successfully at either the radial or optimization-time
limit, with nonincreasing sampled costs.

The two integrations for Figure~\ref{fig:rates} use the same tolerances and
maximum step, with the initial conditions in Section~\ref{sec:numerics}.
The simple-root run stops at $y=10^{-40}$ and the critical run at
$y=10^{-100}$. Both are represented by the smoothly extended equations
in $(r,\log y)$; the very small radial coordinates do not require
subtraction of nearly equal original gain coordinates.
For evaluation of the smallest Gramian eigenvalue we use
$\lambda_{\min}(X_K)=\det(X_K)/\lambda_{\max}(X_K)$, with the exact
identity $\det X_K=r^2y/(4d_b^2)$. This avoids cancellation in the
small eigenvalue. The radial velocity is evaluated directly from the
vector field, so the plotted ratios do not use an estimated collision time.

The script \texttt{recompute\_redesign.py} generates the additional
trajectories, and \texttt{render\_integrated\_figures.py} renders all four
main figures from stored CSV records. The latter uses the original
initialization and excitation records for
Figures~\ref{fig:initialization_grid} and~\ref{fig:excitation}.
The additional records are stored in \texttt{results/redesign}.

\subsection{Initialization stopping rules and outcome counts}
\label{app:gridprotocol}
Each trajectory is followed until it attains the safe threshold $J_0\le1/2$, reaches $y=10^{-14}$, or reaches $\sigma=1000$. Here $J_0$ denotes the original cost for initial state $e_1$. By Lemma~\ref{lem:safecost}, reaching this threshold certifies convergence. Its event function is $cr+(h+15)r^2+r^2y$, which avoids subtracting nearly equal costs. A boundary approach is recorded when the radial cutoff is reached with negative radial velocity, provided the terminal direction lies within $0.05$ of the attracting inward root. At a double root, the terminal direction must also lie on its attracting side. Remaining cases are recorded as unresolved. DOP853 uses relative tolerance $2\times10^{-10}$, absolute tolerance $2\times10^{-12}$, and maximum step $0.1$ in $\sigma$.

\begin{table}[!htbp]
\centering
\caption{Outcomes on the common grid at $v=14$. Boundary approach denotes the stated finite cutoff observation. The three unresolved cases are examined at a smaller cutoff below.}
\label{tab:initialization_grid}
\small
\setlength{\tabcolsep}{5pt}
\begin{tabular}{rrrrr}
\toprule
$h$ & Gains & Safe threshold & Boundary approach & Unresolved\\
\midrule
16 &84&84&0&0\\
25 &84&45&36&3\\
64 &84&36&48&0\\
160&84&36&48&0\\
225&84&84&0&0\\
\bottomrule
\end{tabular}
\end{table}

The three cases unresolved at the first cutoff are continued from their
original initial gains with cutoff $10^{-60}$ and both tolerances divided
by ten. Their safe-threshold entry coordinates are reported in
Section~\ref{sec:numerics} and in
\texttt{initialization\_critical\_continuation.csv}.

\subsection{Excitation protocol and numerical results}
\label{app:excitationprotocol}
The excitation runs use relative tolerance $2\times10^{-10}$, absolute
tolerance $2\times10^{-12}$, and maximum step $0.1$ in $\sigma$.
Each run stops at $y=10^{-14}$, $\tau=5000$, or $\sigma=1000$.
The common initial direction is computed from the smallest real root of
$p_h$, rather than its rounded decimal representation.
The minimum radial coordinates below are the recorded values in
\texttt{excitation\_summary.csv}; the plotted paths use the corresponding
dense trajectory records. Each run records its own objective $J_\varepsilon$
and the shared cost $J_0$. The objectives are nonincreasing at the recorded
points. The entry time $\tau_s$ for $J_0\le1/2$ is a shared performance
measurement for the perturbed flows.
\begin{table}[!htbp]
\centering
\caption{Effect of additional excitation from a common initial gain. $\tau_s$ is the first time the shared cost reaches $J_0\le1/2$. Gain errors for positive $\varepsilon$ are measured at $\tau=5000$. The run with $\varepsilon=0$ stops at the radial cutoff.}
\label{tab:excitation}
\small
\begin{tabular}{rrrr}
\toprule
$\varepsilon$ & Minimum $y$ & $\tau_s$ & $\|K(5000)-K_\star\|$\\
\midrule
$0$ & $10^{-14}$ & --- & ---\\
$10^{-10}$ & $1.9524\times10^{-4}$ & $0.006752$ & $1.3081\times10^{-3}$\\
$10^{-8}$ & $1.9522\times10^{-3}$ & $0.007484$ & $1.3081\times10^{-3}$\\
$10^{-6}$ & $1.9467\times10^{-2}$ & $0.014026$ & $1.3065\times10^{-3}$\\
$10^{-4}$ & $2.5000\times10^{-2}$ & $0.068228$ & $1.1679\times10^{-3}$\\
\bottomrule
\end{tabular}
\end{table}

For $\varepsilon=10^{-10}$ and $10^{-8}$, the ratios of the minimum
radial coordinate to $\sqrt\varepsilon$ are approximately $19.524$
and $19.522$. These are observations at two weights; no scaling law
is inferred. Repeating the $\varepsilon=10^{-10}$ integration with
both tolerances divided by ten changes its minimum radial coordinate
by $6.7\times10^{-14}$ and its shared-threshold entry time by less than
$6\times10^{-16}$.

\subsection{Additional time-based checks}
The original simple-root and critical rate runs both stop at $y=10^{-14}$.
The files are \texttt{hyperbolic\_rates.csv} and
\texttt{critical\_rates.csv}. Their relative and absolute tolerances are
$2\times10^{-11}$ and $2\times10^{-13}$, with maximum step $0.03$
in $\sigma$. At that cutoff the critical direction is approximately
$1.25179077$, and its normalized Gramian ratio is $1.00286728$.
The stored remaining-time curves use reverse quadrature of
$y\,\dd\sigma$ and the theoretical tail $y_{\rm end}/(-G_*)$.
They are retained as supplementary numerical records; the main rate
figure uses direct velocity and spectral ratios instead.

The corresponding collision-time estimates are
$4.3977287\times10^{-3}$ and $2.0096101\times10^{-8}$.
Reducing both tolerances by ten changes the critical cutoff time
by $1.06\times10^{-22}$. This comparison measures sensitivity to the
solver tolerances, rather than an absolute error bound on collision time.

\subsection{Independent algebraic checks}\label{app:checks}
The symbolic script verifies 17 groups of rational identities, including the Lyapunov equation, the exact cost, the metric transformation, both boundary vector fields, the radial flux identity, the discriminants, the radial estimate \eqref{eq:radialV}, and the additional excitation formulae. Each residual simplifies identically to zero. A separate script evaluates the discriminant and flux formulae with exact rational arithmetic on the grids in \ref{app:reproduction}. Three critical parameter pairs are handled exactly, including their repeated roots and negative radial velocities. At the other 153 pairs, the scalar criterion and the polynomial classification agree.

For general data, the rational expressions are compared with independent solutions of the two Lyapunov equations on 40 systems and 120 stabilizing gains. The reference Lyapunov solutions use the Bartels--Stewart algorithm \cite{bartels1972} as implemented in SciPy. Every system undergoes a nontrivial orthogonal rotation before the reference calculation, so the gradient comparison is made in the original Euclidean coordinates. Table~\ref{tab:checks} reports the actual residuals. The comparisons check the cost and the vector field independently of their rational implementation.
\begin{table}[!htbp]
\centering
\caption{Deterministic verification results. Normalized errors use $\|A-A_{\rm ref}\|/(1+\|A_{\rm ref}\|)$, with the Euclidean norm for the gradient and the Frobenius norm for the Gramian.}
\label{tab:checks}
\small
\begin{tabularx}{\linewidth}{@{}>{\raggedright\arraybackslash}X>{\raggedright\arraybackslash}p{0.24\linewidth}>{\raggedright\arraybackslash}p{0.26\linewidth}@{}}
\toprule
Quantity & Cases & Result\\
\midrule
Symbolic rational identities & 17 groups & Zero residuals\\
Cubic discriminant and both forms of $\Efun$ & 156 parameter pairs & Exact agreement\\
Radial flux identity & 936 rational evaluations & Exact agreement\\
Critical direction and radial sign & 3 parameter pairs & Exact agreement\\
General cost & 120 gains, 40 systems & Max. absolute error $8.88\times10^{-15}$\\
Accumulated Gramian & Same 120 gains & Max. normalized error $1.82\times10^{-15}$\\
Original Euclidean gradient & Same 120 gains & Max. normalized error $7.21\times10^{-14}$\\
General and family criteria & 153 noncritical pairs & Complete agreement\\
\bottomrule
\end{tabularx}
\end{table}

\bibliographystyle{elsarticle-num}
\bibliography{references}
\end{document}